\definecolor{darkblue}{RGB}{0,0,160}
\newcommand{\excise}[1]{}
\newcommand{\RR}{\mathbb{R}}
\newcommand{\NN}{\mathbb{N}}
\newtheorem{thm}{Theorem}
\newtheorem{lemma}[thm]{Lemma}
\newtheorem{cor}[thm]{Corollary}
\newtheorem{prop}[thm]{Proposition}
\newtheorem{conj}[thm]{Conjecture}
\newtheorem{prob}[thm]{Problem}
\theoremstyle{definition}
\newtheorem{example}[thm]{Example}
\newtheorem{remark}[thm]{Remark}
\newtheorem{defn}[thm]{Definition}
\numberwithin{equation}{section}
\newcommand{\Mathematica}{\textsc{Mathematica}\xspace}
\newcommand{\Mtwo}{\textsc{Macaulay2}\xspace}
\begin{document}

	\title[The semi-algebraic geometry of optimal designs for the
	Bradley--Terry model]{The semi-algebraic geometry of saturated optimal designs for the
		Bradley--Terry model}
	
	\author[T.~Kahle]{Thomas Kahle}
	\address[]{Fakultät für Mathematik\\Otto-von-Guericke Universität
		Magdeburg\\39106 Magdeburg\\Germany}
	\email{thomas.kahle@ovgu.de}
	\urladdr{\url{http://www.thomas-kahle.de}}
	\author[F.~Röttger]{Frank Röttger}
	\address[]{Research Center for Statistics\\Université de Genève\\Boulevard du Pont d’Arve 40\\ 1205 Genève\\Switzerland}
	\email{frank.roettger@unige.ch}
	\urladdr{\url{https://sites.google.com/view/roettger/}}
	\author[R.~Schwabe]{Rainer Schwabe}
	\address[]{Fakultät für Mathematik\\Otto-von-Guericke Universität
		Magdeburg\\39106 Magdeburg\\Germany}
	\email{rainer.schwabe@ovgu.de}
	\urladdr{\url{http://www.imst3.ovgu.de}}
	
	\makeatletter
	\@namedef{subjclassname@2010}{\textup{2020} Mathematics Subject Classification}
	\makeatother
	
	\subjclass[2010]{Primary: 62K05, 62R01 Secondary: 13P25, 14P10, 62J02}
	

	\begin{abstract}
          Optimal design theory for nonlinear regression studies local
          optimality on a given design space.  We identify designs for
          the Bradley--Terry paired comparison model with small
          undirected graphs and prove that every saturated, locally
          $D$-optimal design is represented by a path.  We discuss the
          case of four alternatives in detail and derive explicit
          polynomial inequality descriptions for optimality regions in
          parameter space.  Using these regions, for each point in
          parameter space we can prescribe a locally $D$-optimal design.
		
          \vspace{5pt}
          \noindent {\it Key words and phrases:}
          nonlinear regression, optimal design, polynomial
          inequalities
	\end{abstract}
	
	\maketitle

\section{Introduction}

Consider an experimental situation in which $ m $ alternatives are to be
brought into a rank order. For each single observation in this
experiment only two of these alternatives can be compared at a time
and only a binary response can be observed which indicates the rank
order of the two alternatives presented. Such experiments are known in
economics as ``discrete choice'' experiments and in psychology as
``forced choice'' experiments or ``ipsative measures''. The use of
such experiments dates back to the work by Fechner~\cite{Fechner1860} on
psychophysics in a deterministic setup. In a statistical setup this
situation is described by the Bradley-Terry model, which was introduced
in \cite{zermelo29} to rank chess players in tournaments and in
\cite{BradleyTerry} to analyze taste testing results for pork
depending on different feeding patterns.  See \cite{kahle20:DMV} for a
leisurely introduction.  This model has proven popular in different
areas of statistics, also outside of chess tournaments and pork
tasting. In \cite{Hastie1998}, Hastie and Tibshirani developed a coupling model similar to the
Bradley--Terry model to study class probabilities for pairs of
classes.  \cite{SimonsYao1999} discussed the model asymptotics when
the number of potential alternatives tends to infinity.  Algorithms
for Bradley--Terry models are discussed, for example,
in~\cite{Hunter2004}, and asymptotics of algorithms, for example,
in~\cite{DuchiMackeyJordan2013}. Besides marketing or transportation,
another popular application area for the Bradley--Terry model is the
world of professional sports such as American football, car racing,
matching in tournaments, card games or strategies for sport bets,
see~\cite{Callaghan2007, Graves2003,Berman2004,Kobayashi2006}.  The
Bradley--Terry model is part of a broader class of models that
describe statistical rankings.  Specifically, it arises from
marginalization of the Plackett--Luce model,
see~\cite{sturmfelswelker}.

In this paper we are interested in optimal experimental designs for
the Bradley--Terry model, that is, a scheme to assign a fixed
number of measurements to different experimental settings, such that
the experiment is most informative about the parameters.  Optimal
experimental designs for the Bradley--Terry model were first
investigated in \cite{Torsney2004}, which gave an algorithmic
approach to fit the model parameters. In \cite{Grasshoff2008} Gra{\ss}hoff and Schwabe
completely analyzed the case of three competing alternatives (with pair comparisons).  They gave
symbolic solutions for the design problem depending on the parameters
and described the optimality areas of these design classes in the
parameter space.
The present paper extends the results of Gra{\ss}hoff and Schwabe in
two directions.  We discuss the case of four competing alternatives in detail
and characterize optimal saturated designs for an arbitrary number of
competing alternatives, always with pair comparisons.
The case of four alternatives arises also when considering a $ 2^2 $ layout with interaction where two attributes can be set to two levels each. After a reparametrization, which does not affect the $D$-optimality, this model can be identified as a single-attribute model with four levels which can be used as alternatives in the Bradley--Terry model.

Section~\ref{s:generalSetup} gives the general
setup. Section~\ref{s:fouralternatives} contains an almost complete
analysis of the case of 4 alternatives.  Only one very challenging
polynomial inequality system remains open
(Problem~\ref{prob:infeasible}).  In
Sections~\ref{s:graphrepresentation} and~\ref{s:optregion} we discuss
saturated optimal designs for an arbitrary number of alternatives.
Our main result is an easy combinatorial polynomial inequality
description of regions in parameter space where a given saturated
design is optimal, including the information for which designs these
regions of optimality are empty (Theorem~\ref{t:regions}).  Polynomial
inequality constraints in experimental design are a recurrent topic.
See \cite{kahle2015algebraic} for a discussion of this principle for
Poisson regression.
Knowledge about the optimality regions can be very helpful in
designing experiments.  For example, a screening experiment could
reveal that the estimates of the parameters are all within one region
of optimality.  In such a situation it is then clear which design to
use.  See \cite{dette2004optimal} for a general class of models where
local optimality is studied.  In the discussion in
Section~\ref{sec:discussion} we compare the efficiency of our tailored
designs versus uniform designs as the parameters grow in magnitude.

\section{General Setup}\label{s:generalSetup}
We consider pairs $(i,j)$ of alternatives $i,j=1,\ldots,m$.  The
preference of $i$ over $j$ is modeled by a binary variable $ Y(i,j) $
taking the value $ Y(i,j)=1 $ if $ i $ is preferred over $ j $ and
$ Y(i,j)=0 $ otherwise.  We do not consider any order effects here.
The main assumption of the Bradley--Terry model is that there is a
hidden ranking of the alternatives according to some numerical
preference value $\pi_i>0$, $i=1,\dots,m$.  When presented with the
pair $(i,j)$, the probability of preferring $i$ over $j$ is
\[
\mathbb{P}( Y(i,j)=1 )=\frac{\pi_i}{\pi_i+\pi_j}.
\]
The model can be transformed into a logistic model using
$ \beta_i:=\log(\pi_i)$.  Then
\[
\mathbb{P}(Y(i,j)=1)=\frac{1}{1+\exp(-(\beta_i-\beta_j))}=\eta(\beta_i-\beta_j)
\]
with $ \eta(z)=(1+\exp(-z))^{-1} $ as the inverse logit link function.

Scaling all $\pi_i$ with a constant factor leaves the preference
probabilities invariant.  Therefore one can without loss of generality
assume that $\pi_m=1$ or $\beta_m=0$. This means that the number of
parameters of the Bradley--Terry model is~$m-1$.  The number of
alternatives is the main measure of complexity of the design theory as
it equals the dimension of the design space. The remaining parameters
can be identified and $\beta_m=0$ is known as \emph{control coding}.
We denote by $e_i$ the $i$-th standard unit vector in $\RR^{m-1}$.  To
exhibit our model as a generalized linear model, the \emph{regression
  vector} for a pair $(i,j)$ is
\[
f(i,j)=
\begin{cases} e_i-e_j, & \text{for } i,j\neq m,\\
  e_i, & \text{for } i<j, j=m,\\
  0 & \text {for } i=j=m.
  \end{cases}
\]
With $ \beta^{T}=(\beta_1,\ldots,\beta_{m-1}) $ this yields
$\mathbb{P}(Y(i,j)=1)=\eta(f(i,j)^{T}\beta)$ where $f(i,j)^T \beta$ is
the linear predictor.

\begin{remark}
  When all probabilities
   \[p_{ij}:=\mathbb{P}(Y(i,j)=1)=\frac{\pi_i}{\pi_i+\pi_j}~~\text{ for }
  i,j\in[m]:=\{1,2,\ldots,m\}\]  are treated as coordinates
  in~$\RR^{m(m-1)}$, the Bradley--Terry model can be described by
  algebraic equations.  This means that all values of the $p_{ij}$
  that arise for different values of $\pi$ satisfy certain algebraic
  equations and, among the probability vectors, they are the only
  solutions to these equations. Theorem~7.7 of \cite{sturmfelswelker}
  shows that the model has the special geometric structure of a toric
  variety and its defining equations consist of binomials and linear
  trinomials.
\end{remark}
The \emph{design region} of the Bradley--Terry paired comparison model
is
\[
\mathcal{X} = \{(i,j)\, :\, i,j=1,...,m,\, i<j\}.
\]
It consists of all pairs of ordered alternatives. The pairs $(i,j)$
and $(j,i)$ bear the same information, and the comparison $(i,i)$ of
two identical alternatives does not have any information at all (as
can be seen easily later).  Therefore, whenever there are two
alternatives $i,j\in\{1,\dots,m\}$ we assume~$i<j$.  An
\emph{experimental design} is an assignment of a weight $w_{ij} \ge 0$
to each point $(i,j) \in \mathcal{X}$, such that
$\sum_{ij} w_{ij} = 1$ (compare, for example,
\cite{silvey1980optimal}).  Although a design could be impossible to
realize with a finite number $N$ of observations, it is common to let
$w_{ij} \in \RR$ as opposed to $w_{ij}\in\frac{1}{N}\NN$.
For any $k\in \NN$ we write
\[
\Delta_k := \{w \in \RR^k_{\ge 0} : \sum_l w_l = 1\},
\]
for the $k-1$ dimensional simplex in $\RR^k$, whose vertices are the
standard unit vectors.  It is customary to use $\xi$ to refer to
a design with weights $w_{ij}$ and slightly abuse notation with
expressions like $\xi \in \Delta_{\binom{m}{2}}$.

The information gained from one observation of $Y(i,j)$ is encoded in
the information matrix
\[
M((i,j),\beta)=\lambda_{ij}f(i,j)f(i,j)^{T} \in \RR^{(m-1)\times (m-1)},
\]
where
\[
\lambda_{ij}:=\lambda_{i,j}(\beta)=\eta'(\beta_i-\beta_j)=\frac{e^{\beta_i-\beta_j}}{(1+e^{\beta_i-\beta_j})^2}
\]
is referred to as the \emph{intensity} in~\cite{Grasshoff2008}.  It
holds that $ M((i,j),\beta)=M((j,i),\beta) $ and $ M((i,i),\beta)=0 $.

Assuming independent observations, the information matrix for a design $\xi$ with weights $w_{ij}$ is the
$ (m-1)\times(m-1)$-matrix
\begin{equation}\label{eq:Informationsmatrix}
M(\xi,\beta)=\sum_{(i,j)}w_{ij}M((i,j),\beta)=\sum_{(i,j)}w_{ij}\lambda_{ij}f(i,j)f(i,j)^{T}.
\end{equation}
The theory of optimal experimental design suggests picking weights
$w_{ij}$ that optimize a numerical function of~$M(\xi,\beta)$.
Standard references that include the theory for generalized linear
models are \cite{pukelsheim2006optimal,silvey1980optimal}.  One
popular function to optimize is the logarithm of the determinant:
\begin{defn}
An experimental design $\xi^*$ is locally \emph{$ D $-optimal}, if 
\[\log\det(M(\xi^*,\beta))\ge\log\det(M(\xi,\beta))\] 
for all~$\xi \in \Delta_{{\binom{m}{2}}}$.
\end{defn}
In optimal experimental design one speaks of local optimality if the
optimal choice of a design depends on the unknown parameters that one
wants to learn about, see ~\cite{Chernoff}.  From the perspective of
mathematical optimization one has a parametric family of convex
optimization problems where both the optimization domain (the polytope
of information matrices) and the target function depend on the
parameters~$\beta$.  The methods of convex optimization suggest studying the directional derivatives of the target function.  The
following is found in \cite[Section 3.5.2]{silvey1980optimal}.
\begin{defn} The \emph{directional derivative} (\emph{Fr\'echet
    derivative}) of the $ D $-optimality criterion at $ M_1 $ in the
  direction of $ M_2 $ for some $(m-1) \times (m-1)$ information matrices
  $ M_1,M_2 $ is
\[
F_D(M_1,M_2)=\lim_{\varepsilon\searrow
0}\frac{1}{\varepsilon}\left(\log\det((1-\varepsilon)M_1+\varepsilon
  M_2)-\log \det(M_1) \right).
\]
\end{defn}
It is shown in \cite[Sections~3.8 and~3.11]{silvey1980optimal} that
\begin{align}\label{eq:directionalder}
F_D(M(\xi,\beta),M((i,j),\beta))=\lambda_{ij}f(i,j)^T M(\xi,\beta)^{-1} f(i,j)- (m-1).
\end{align}
This yields the following $ D $-optimality criterion:

\begin{thm}[Kiefer--Wolfowitz] \label{t:silvey}
A design $ \xi^* $ is locally $ D $-optimal if and only if 
\begin{align}\label{eq:optimalityregions}
\lambda_{ij}f(i,j)^T M(\xi^*,\beta)^{-1} f(i,j)\le m-1
\end{align}
for all $ 1\le i<j\le m $ .
\end{thm}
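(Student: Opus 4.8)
The plan is to exploit the concavity of the target function $\xi \mapsto \log\det M(\xi,\beta)$ together with the standard principle that a concave function on a convex domain attains its global maximum at a point precisely when every directional derivative pointing into the domain is nonpositive. Since $M(\xi,\beta)$ depends linearly on the weights $w_{ij}$ by \eqref{eq:Informationsmatrix} and $\log\det$ is concave on the cone of positive definite matrices, the objective is concave on the simplex $\Delta_{\binom{m}{2}-1}$, and the feasible region is convex, so this principle applies.

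First I would rewrite the directional derivative as a trace. By the chain rule for $\log\det$ one has $F_D(M_1,M_2) = \operatorname{tr}(M_1^{-1}M_2) - (m-1)$, which recovers \eqref{eq:directionalder} upon substituting $M_2 = M((i,j),\beta) = \lambda_{ij} f(i,j)f(i,j)^T$ and using $\operatorname{tr}(M_1^{-1}f(i,j)f(i,j)^T) = f(i,j)^T M_1^{-1} f(i,j)$. The key structural observation is that $F_D(M_1,\cdot)$ is affine: because $M(\xi,\beta) = \sum_{(i,j)} w_{ij} M((i,j),\beta)$ and $\sum_{(i,j)} w_{ij} = 1$, I obtain the identity
\[
F_D(M(\xi^*,\beta), M(\xi,\beta)) = \sum_{(i,j)} w_{ij}\, F_D\big(M(\xi^*,\beta), M((i,j),\beta)\big).
\]
Thus the directional derivative toward any competing design $\xi$ is a convex combination of the directional derivatives toward the one-point designs $M((i,j),\beta)$, which are the vertices of the design simplex.

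Both implications then follow quickly. For the forward direction, if $\xi^*$ is $D$-optimal it globally maximizes the concave objective, so moving toward any single-point design cannot increase it; hence $F_D(M(\xi^*,\beta), M((i,j),\beta)) \le 0$, which is exactly \eqref{eq:optimalityregions}. For the converse, assuming \eqref{eq:optimalityregions} holds for all pairs makes every vertex directional derivative nonpositive, and the convex-combination identity forces $F_D(M(\xi^*,\beta), M(\xi,\beta)) \le 0$ for every design $\xi$; concavity then upgrades this to global optimality of $\xi^*$.

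The step I would handle most carefully is the regularity needed for the directional-derivative criterion, since the formula for $F_D$ presupposes that $M(\xi^*,\beta)$ is invertible. For the forward direction this is automatic, because $\log\det M(\xi^*,\beta) = -\infty$ for singular $M$, while designs with full-rank information matrix exist (for instance, the vectors $f(i,m) = e_i$ already span $\RR^{m-1}$), so an optimal $\xi^*$ must be nonsingular. For the converse the hypothesis \eqref{eq:optimalityregions} is itself only meaningful when $M(\xi^*,\beta)^{-1}$ exists, so I would state the theorem for designs with invertible information matrix, matching the convention implicit in \eqref{eq:directionalder}.
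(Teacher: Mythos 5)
Your proof is correct, and it is essentially the standard argument for the Kiefer--Wolfowitz equivalence theorem: the paper itself gives no proof but defers to \cite{silvey1980optimal}, whose proof rests on exactly the ingredients you use (concavity of $\log\det$ composed with the linear map $\xi\mapsto M(\xi,\beta)$, the trace formula for the Fr\'echet derivative, and the fact that $F_D(M_1,\cdot)$ is affine so that the vertex conditions control all directional derivatives). Your closing remark on nonsingularity of $M(\xi^*,\beta)$ is also the right caveat and is consistent with how the paper uses the criterion.
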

The following corollary from \cite[Corollary~3.10]{silvey1980optimal}
is very useful.
\begin{cor}\label{c:silvey}
For design points $(i,j)$ with positive weight in $ \xi^{*} $, the inequalities
\eqref{eq:optimalityregions} in Theorem~\ref{t:silvey} hold with
equality.
\end{cor}

A main observation about the Bradley--Terry model is that it is useful
to represent pairs $(i,j)$ with positive weights $w_{ij}$ as the edges
of an undirected graph on the vertex set $\{1,\dots,m\}$.  Properties
of these graphs, in particular the edge density, determine the
asymptotics of estimation for sparse Bradley--Terry
models~\cite{han2020asymptotic}.

\begin{defn}\label{d:graphrepresentation}
A \emph{graph representation of a design $\xi$} for the Bradley--Terry
model is the undirected simple graph with vertex set $\{1,\dots,m\}$,
and edge set $E = \{(i,j): w_{ij} > 0\}$.
\end{defn}

Using standard notions from graph theory, a \emph{tree} is a connected
graph with no cycles.  A \emph{path} is a tree in which every vertex
is connected to at most two other vertices.
We exploit the symmetry of the model.  The symmetric group $S_{m}$ of
all bijective self-maps of $\{1,\dots,m\}$ permutes the alternatives.
The permutation action extends to ordered pairs by acting on both
entries of the pair simultaneously (and changing the order if
necessary).  The action also extends naturally to designs $\xi$ on
pairs $(i,j)$ by putting
$(\xi^\sigma)_{(i,j)} = \xi_{\sigma^{-1}(i,j)}$ for any $\sigma\in S_m$. 
A graph representation of an entire orbit under this action is simply the
unlabeled graph.  Proposition~\ref{p:symmetric} below expresses that
for properties of the model it is irrelevant which alternative is
alternative~1, which is alternative 2 and so on.  One only needs to
take care that upon relabeling the parameters, regression vectors,
etc.\ are relabeled accordingly.

In our setup we have singled out the last alternative~$m$ and set
$\beta_m = 0$ to have identifiable parameters. This changes the
symmetry and needs to be accounted for.  The concepts of this paper,
however, are compatible with this.  For example the value of the
determinant of a design is equivariant:
\begin{prop}\label{p:symmetric}
  Let $\sigma \in S_{m}$ and let $\xi$ be any design.  Then $\xi$ is
  locally $D$-optimal for the parameters
  $\beta^T=(\beta_1,\dots,\beta_{m-1})$ if and only if $\xi^{\sigma}$
  is locally $D$-optimal for $Q_\sigma^{-T} \beta $, where
  $\sigma \mapsto Q_\sigma $ is a group homomorphism from $S_m$ to the
  group of invertible $(m-1) \times (m-1)$-matrices satisfying
  $ f(\sigma(i),\sigma(j))=Q_\sigma f(i,j) $ for all $\sigma\in S_m$.
\end{prop}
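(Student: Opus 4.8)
The plan is to reduce everything to a single matrix identity relating the information matrices before and after applying the symmetry, and then to read off $D$-optimality by taking determinants.

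First I would construct the homomorphism $\sigma \mapsto Q_\sigma$. The difficulty here is that control coding singles out alternative $m$, whereas $\sigma$ may move $m$; the cleanest way to handle this is to pass to the ambient space $\RR^m$ on which $S_m$ acts by the permutation matrices $P_\sigma$ with $P_\sigma \tilde e_i = \tilde e_{\sigma(i)}$, where $\tilde e_1,\dots,\tilde e_m$ is the standard basis of $\RR^m$. Writing $d(i,j) := \tilde e_i - \tilde e_j$, one checks directly from the definition of $f$ that $f(i,j) = J^T d(i,j)$, where $J^T\colon \RR^m \to \RR^{m-1}$ is the projection dropping the last coordinate. The vectors $d(i,j)$ span the zero-sum hyperplane $V_0 = \{v \in \RR^m : \sum_k v_k = 0\}$; the matrix $P_\sigma$ restricts to an automorphism of $V_0$, and $J^T$ restricts to an isomorphism $V_0 \xrightarrow{\sim} \RR^{m-1}$ since its kernel $\langle \tilde e_m\rangle$ meets $V_0$ trivially. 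I would therefore \emph{define} $Q_\sigma := (J^T|_{V_0})\, P_\sigma\, (J^T|_{V_0})^{-1}$. As a conjugate of the representation $P$, the assignment $\sigma \mapsto Q_\sigma$ is a group homomorphism into the invertible $(m-1)\times(m-1)$ matrices, and from $d(\sigma(i),\sigma(j)) = P_\sigma d(i,j)$ one obtains $f(\sigma(i),\sigma(j)) = J^T P_\sigma d(i,j) = Q_\sigma f(i,j)$, the required relation.

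Next I would record that the linear predictor, and hence each intensity, is invariant under the simultaneous transformation. From $f(\sigma(i),\sigma(j)) = Q_\sigma f(i,j)$ one computes $f(\sigma(i),\sigma(j))^T Q_\sigma^{-T}\beta = f(i,j)^T Q_\sigma^T Q_\sigma^{-T}\beta = f(i,j)^T\beta$, so that $\lambda_{\sigma(i)\sigma(j)}(Q_\sigma^{-T}\beta) = \lambda_{ij}(\beta)$ because $\lambda$ depends on $\beta$ only through this predictor. The heart of the argument is then the identity
\[ M(\xi^\sigma,\, Q_\sigma^{-T}\beta) = Q_\sigma\, M(\xi,\beta)\, Q_\sigma^T, \]
valid for \emph{every} design $\xi$. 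I would prove it by expanding the left-hand side as in \eqref{eq:Informationsmatrix}, using $(\xi^\sigma)_{(i,j)} = \xi_{\sigma^{-1}(i,j)}$ to reindex the sum over pairs by the bijection $\sigma$, and then inserting the intensity invariance together with $f(\sigma(i),\sigma(j)) f(\sigma(i),\sigma(j))^T = Q_\sigma\, f(i,j) f(i,j)^T\, Q_\sigma^T$; pulling $Q_\sigma$ and $Q_\sigma^T$ out of the sum gives the claim. One should note that $\lambda$ and the rank-one summands are symmetric in the two entries of a pair, so the reindexing is insensitive to the ordering convention $i<j$.

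Finally I would conclude. Taking determinants of the identity yields $\log\det M(\xi^\sigma, Q_\sigma^{-T}\beta) = \log\det M(\xi,\beta) + 2\log|\det Q_\sigma|$, where the additive constant does not depend on the design. Since $\zeta \mapsto \zeta^\sigma$ is a bijection of the set of all designs, with inverse $\zeta \mapsto \zeta^{\sigma^{-1}}$, writing an arbitrary competitor as $\eta = \zeta^\sigma$ and applying the identity to $\zeta$ shows that the defining inequality $\log\det M(\xi^\sigma,Q_\sigma^{-T}\beta) \ge \log\det M(\eta, Q_\sigma^{-T}\beta)$ holds for all $\eta$ if and only if $\log\det M(\xi,\beta) \ge \log\det M(\zeta,\beta)$ holds for all $\zeta$, which is precisely the asserted equivalence. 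Alternatively one could transport the Kiefer--Wolfowitz inequalities of Theorem~\ref{t:silvey} through the same matrix identity, but the determinant route avoids inverting $M$. I expect the only genuine subtlety to be the construction and well-definedness of $Q_\sigma$ in the presence of control coding; the zero-sum hyperplane description above is what makes that step transparent, and the remaining computations are routine bookkeeping.
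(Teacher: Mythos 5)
Your proof is correct, but it takes a genuinely different route from the paper's. The paper outsources the main equivalence --- that $\xi^{\sigma}$ is $D$-optimal for $Q_\sigma^{-T}\beta$ once intertwiners $Q_\sigma$ exist --- to a citation of Radloff--Schwabe, and then only exhibits $Q_\sigma$ concretely on generators: the usual permutation matrix for a transposition of $i,j<m$, and the identity with the $i$-th row replaced by $(-1,\dots,-1)$ for the transposition $(i\,m)$, extending to all of $S_m$ since transpositions generate. You instead prove everything from scratch: your construction of $Q_\sigma$ as the conjugate $(J^T|_{V_0})\,P_\sigma\,(J^T|_{V_0})^{-1}$ of the permutation representation on the zero-sum hyperplane gives the homomorphism property for free (something the paper's generator-by-generator definition leaves implicit --- though it is harmless there, because the $f(i,j)$ span $\RR^{m-1}$, so $Q_\sigma$ is uniquely determined by the intertwining relation and the extension is automatically multiplicative), and your identity $M(\xi^{\sigma},Q_\sigma^{-T}\beta)=Q_\sigma M(\xi,\beta)Q_\sigma^{T}$, combined with multiplicativity of the determinant and the fact that $\zeta\mapsto\zeta^{\sigma}$ bijects the design simplex, replaces the citation entirely. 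Your version buys self-containment and conceptual clarity about why control coding is compatible with the symmetry; the paper's buys brevity and explicit matrices that are directly usable for computing the transformed parameters $Q_\sigma^{-T}\beta$ in Sections~\ref{s:fouralternatives} and~\ref{s:optregion}. Your side remarks (symmetry of $\lambda_{ij}$ and of the rank-one summands under swapping the pair, so the reindexing respects the convention $i<j$) are exactly the right points to flag.
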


\begin{proof}
	By \cite[Section~2]{Radloff2016}, the design $\xi^{\sigma}$ is
	locally optimal for the parameter $Q_\sigma^{-T} \beta $ if and only
	if there exist matrices $Q_\sigma$ as in the statement.  As
	transpositions generate all permutations, it suffices to show the
	existence of such a $Q_\sigma$ for all transpositions.  For
	transpositions of $i<m$ and $j<m$, let $Q_\sigma$ be the usual
	permutation matrix.  For a transposition $(i m)$, let $Q_\sigma$
	equal an identity matrix, with the $i$-th row replaced by the row
	$(-1\ldots,-1)$.  Then, for an arbitrary permutation $\sigma$, it
	holds that $f(\sigma(i),\sigma(j))=Q_\sigma f(i,j)$.
\end{proof}

\section{Saturated designs and graph-representation}
\label{s:graphrepresentation}
An experimental design is \emph{saturated} if its support has
cardinality equal to the number of free parameters of the model.  In
our case of $ D $-optimality, if a design has support size strictly
smaller than $m-1$, the determinant of the information matrix vanishes
and optimality is impossible.  A useful result about saturated designs
is that their weights are completely rigid: they are all equal
\cite[Lemma 5.1.3]{silvey1980optimal} and thus only the different
supports are considered.  We first study which saturated designs can
be $D$-optimal.  The following simple fact is reminiscent of the
\emph{connectedness} of block designs with block length two in
\cite[p.2]{shah1989}.
\begin{lemma}\label{l:trees}
  For any locally $D$-optimal saturated design $\xi$ of the
  Bradley--Terry paired comparison model, the graph representation of
  the support is a tree.
\end{lemma}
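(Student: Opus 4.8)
The plan is to translate the condition that the information matrix is nonsingular into a purely graph-theoretic statement about the support edge set, and then invoke the elementary fact that an acyclic graph on $m$ vertices with exactly $m-1$ edges is a tree. The bridge between the two worlds is the observation that the regression vectors $f(i,j)$ are precisely the columns of the reduced (vertex-$m$-deleted) oriented incidence matrix of the graph representation: for an edge $\{i,j\}$ with $i,j\neq m$ the column is $e_i-e_j$, while for an edge $\{i,m\}$ deleting the $m$-th coordinate turns $e_i-e_m$ into $f(i,m)=e_i$.

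First I would argue that a $D$-optimal design must have a nonsingular information matrix. Since $\lambda_{ij}>0$ for every pair (as $\eta'>0$ everywhere), the star centred at $m$ gives regression vectors $e_1,\dots,e_{m-1}$ and hence a positive definite matrix with positive determinant; therefore the supremum of $\log\det M(\xi,\beta)$ over the simplex is finite and any maximiser satisfies $\det M(\xi,\beta)>0$. By~\eqref{eq:Informationsmatrix} the information matrix is $\sum_{e\in E} w_e\lambda_e f_e f_e^{T}$, a sum of rank-one positive semidefinite terms with strictly positive coefficients $w_e\lambda_e$ ranging over the support $E$. Such a matrix is positive definite if and only if $\{f_e : e\in E\}$ spans $\RR^{m-1}$; as the design is saturated we have $|E|=m-1$, so spanning is equivalent to linear independence of the regression vectors.

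Next I would show that linear independence of these regression vectors is equivalent to the support being acyclic. The hard part, and really the only point that needs care, is that control coding singles out vertex $m$, so one works with the reduced incidence matrix $\bar B$ rather than the full incidence matrix $B$, and one must verify that deleting the row of vertex $m$ does not change which edge sets are independent. This follows because every column of $B$ has entry sum zero, so $\mathbf{1}^{T}B=0$; consequently any $x$ with $\bar B x=0$ satisfies $Bx=\alpha e_m$ with $\alpha=\mathbf{1}^{T}Bx=0$, giving $\ker\bar B=\ker B$. Hence a set of columns of $\bar B$ is dependent exactly when the corresponding columns of $B$ are, that is, exactly when the chosen edges carry a nontrivial cycle. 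Therefore the $m-1$ regression vectors are independent if and only if $E$ is a forest, and a forest on $m$ vertices with $m-1$ edges has $m-(m-1)=1$ connected component, so it is connected and acyclic, i.e.\ a tree, which completes the argument.
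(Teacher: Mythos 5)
Your proof is correct, but it follows a genuinely different route from the paper's. The paper argues combinatorially: a saturated design whose graph contains a cycle must leave some alternative untouched, the rank-one summands $\lambda_{ij}f(i,j)f(i,j)^{T}$ then leave the corresponding row and column of $M(\xi,\beta)$ identically zero, hence $\det M(\xi,\beta)=0$; the case where the untouched alternative is the distinguished vertex $m$ is handled by invoking Proposition~\ref{p:symmetric}. You instead identify the regression vectors with the columns of the reduced oriented incidence matrix and use the graphic-matroid fact that such columns are linearly independent exactly when the edge set is a forest, with the kernel computation $\ker\bar B=\ker B$ taking care of the control coding. Your version buys two things. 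First, it dispenses with the appeal to the symmetry proposition. Second, and more substantively, it is the more robust argument: for $m\ge 5$ a support of size $m-1$ can contain a cycle while still touching all $m$ vertices (e.g.\ edges $12,23,13,45$ for $m=5$), so the paper's step ``cycle $\Rightarrow$ missing alternative $\Rightarrow$ zero row'' does not literally apply to every cyclic saturated design, whereas your linear-dependence criterion covers all cases uniformly (in the example, $f(1,2)+f(2,3)=f(1,3)$ already forces the rank deficiency). The paper's approach is more hands-on about the entries of the information matrix; yours isolates the correct general principle, namely that the relevant matroid is the cycle matroid of the graph representation.
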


\begin{proof}
	A saturated design consists of $m-1$ equally weighted comparisons.  If
	there is a cycle $i_1,\dots,i_k$ in the graph representation of the
	design, then there is at least one alternative that does not appear in
	the design and therefore is represented by a disconnected vertex in the
	graph representation.  Now, the $ (m-1)\times(m-1)$-information matrix
	of a saturated design is a sum of $m-1$ rank one matrices of the form
	$\lambda_{ij}f(i,j)f(i,j)^{T}$. For $ 1\le i<j\le m-1 $, these rank
	one matrices only have entries in the $ i $-th and $ j $-th rows and
	columns. For $ j=m $, there is only one entry $ \lambda_{im} $ in the
	intersection of the $ i $-th row and $ i $-th column. Thereby, if a
	saturated design contains a cycle and misses one alternative that is not $ m $, the
	information matrix has no non-zero entries in either the corresponding
	row or the corresponding column. If alternative $ m $ is missed, it follows that every row sum of the information matrix is zero, as all rank one matrices are of the form $ \lambda_{ij} (e_i-e_j)(e_i-e_j)^{T} $. 
	Therefore the determinant of the
	information matrix is zero, and the design can never be optimal.
\end{proof}

Based on this fact we can determine the saturated optimal designs for
the Bradley--Terry model.

\begin{thm}
\label{t:path}
In the Bradley--Terry paired comparison model with $m$ alternatives,
if a design is saturated and locally D-optimal, then its graph
representation is a path on~$[m]:=\{1,2,\ldots,m\}$.
\end{thm}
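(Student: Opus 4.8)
\emph{Proof plan.} The plan is to combine Lemma~\ref{l:trees} with the Kiefer--Wolfowitz criterion (Theorem~\ref{t:silvey}) to rule out branch vertices. By Lemma~\ref{l:trees} the support of a $D$-optimal saturated design is a tree on $[m]$, and a tree is a path precisely when every vertex has degree at most two. So I would argue by contradiction: suppose some vertex $v$ has three distinct neighbours $a,b,c$. Because the support is a tree, none of the pairs $(a,b),(a,c),(b,c)$ is an edge (any such edge would close a triangle through $v$), so each is an admissible direction in Theorem~\ref{t:silvey}. My goal is to show that for \emph{every} parameter $\beta$ at least one of these three non-edges violates the optimality inequality~\eqref{eq:optimalityregions}, so that the design can never be $D$-optimal.

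The technical heart is an effective-resistance reformulation of~\eqref{eq:optimalityregions}. Since the weights of a saturated design are all equal to $1/(m-1)$, we have $M(\xi,\beta)=\tfrac{1}{m-1}L$, where $L=\sum_{(k,l)\in E}\lambda_{kl}f(k,l)f(k,l)^{T}$ is the reduced weighted graph Laplacian obtained by grounding the distinguished vertex $m$ (recall that $f(i,m)=e_i$ plays the role of $e_i-e_m$ after suppressing the $m$-th coordinate). Interpreting each edge as a resistor of conductance $\lambda_{kl}$, the Matrix--Tree/electrical-network identity gives $f(i,j)^{T}M(\xi,\beta)^{-1}f(i,j)=(m-1)\,R_{\mathrm{eff}}(i,j)$, where $R_{\mathrm{eff}}(i,j)$ is the effective resistance between $i$ and $j$. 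In a tree the unique $i$--$j$ path $P_{ij}$ carries all the current, so $R_{\mathrm{eff}}(i,j)=\sum_{(k,l)\in P_{ij}}1/\lambda_{kl}$. For an edge of the tree this reduces to $1/\lambda_{ij}$, recovering the equality of Corollary~\ref{c:silvey}; for a non-edge, condition~\eqref{eq:optimalityregions} becomes simply
\[
\lambda_{ij}\sum_{(k,l)\in P_{ij}}\frac{1}{\lambda_{kl}}\le 1.
\]

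For the three non-edges through $v$ each path has exactly two edges. Writing $\lambda_{kl}=\bigl(4\cosh^{2}\tfrac{\beta_k-\beta_l}{2}\bigr)^{-1}$ and using that $\beta$-differences add along paths, the condition for $(a,b)$ simplifies to $1+\cosh s_a+\cosh s_b\le\cosh(s_a-s_b)$, where $s_a=\beta_a-\beta_v$ and $s_b=\beta_b-\beta_v$; the conditions for $(a,c)$ and $(b,c)$ are the analogues for $(s_a,s_c)$ and $(s_b,s_c)$, with $s_c=\beta_c-\beta_v$. A short estimate then finishes the argument: whenever two of these numbers, say $s_a,s_b$, are both $\ge 0$ or both $\le 0$, we have $|s_a-s_b|\le\max(|s_a|,|s_b|)$, so monotonicity of $\cosh$ on $[0,\infty)$ gives $\cosh(s_a-s_b)\le\max(\cosh s_a,\cosh s_b)<1+\cosh s_a+\cosh s_b$ and the corresponding inequality fails. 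But among any three real numbers some two are both $\ge 0$ or both $\le 0$, so by the pigeonhole principle at least one of the three non-edges violates~\eqref{eq:optimalityregions}. This contradiction shows that $v$ cannot have degree at least three, hence the tree is a path.

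I expect the effective-resistance reformulation to be the step that demands the most care, since one must justify the identity $f(i,j)^{T}M(\xi,\beta)^{-1}f(i,j)=(m-1)R_{\mathrm{eff}}(i,j)$ in the presence of the grounded vertex $m$ (whose incident regression vectors are the $e_i$ rather than genuine differences) and verify that it is consistent with Corollary~\ref{c:silvey}. Once the problem has been recast as the $\cosh$ inequality, the remaining sign-and-pigeonhole argument is elementary.
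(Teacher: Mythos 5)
Your proposal is correct, and it takes a genuinely different route from the paper's. The paper proves Theorem~\ref{t:path} by downward induction: it deletes a leaf of the tree, shows via a block computation with $F^{-1}$ that optimality passes to the reduced design (the equality in \eqref{eq:reduction}), and so reduces to the base case $m=4$, where the claw $K_{1,3}$ is excluded by a \Mathematica computation (Section~\ref{s:claws4}). You instead attack a branch vertex directly: if $v$ has three neighbours $a,b,c$ in the tree, each non-edge among them yields the two-edge condition $\lambda_{ab}\bigl(\lambda_{av}^{-1}+\lambda_{bv}^{-1}\bigr)\le 1$, which under $\lambda_{kl}=\bigl(4\cosh^{2}\tfrac{\beta_k-\beta_l}{2}\bigr)^{-1}$ becomes $1+\cosh s_a+\cosh s_b\le\cosh(s_a-s_b)$, and your sign-plus-pigeonhole estimate correctly shows at least one of the three must fail (since two of $s_a,s_b,s_c$ share a sign, giving $\cosh(s_a-s_b)\le\max(\cosh s_a,\cosh s_b)<1+\cosh s_a+\cosh s_b$). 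The one step you rightly flag as delicate, the identity $f(i,j)^{T}M(\xi,\beta)^{-1}f(i,j)=(m-1)\sum_{e\in P_{ij}}\lambda_e^{-1}$, is sound and can be justified without electrical-network language: writing $M=\tfrac{1}{m-1}F^{T}QF$ for the tree's regression matrix $F$, the vector $x=F^{-T}f(i,j)$ is the unique solution of $\sum_e x_e f(e)=f(i,j)$, namely the signed indicator of the tree path from $i$ to $j$ (a telescoping sum, valid also through the grounded vertex $m$), whence $f(i,j)^{T}M^{-1}f(i,j)=(m-1)x^{T}Q^{-1}x$; this is exactly the computation Lemma~\ref{l:saturatedpath} performs for the specific path $(12,23,\dots,(m-1)m)$. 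Your approach buys a fully computer-free argument that is uniform in $m$ and, as a by-product, supplies precisely the short human-readable infeasibility certificate for the claw system that Section~\ref{s:claws4} asks for; the paper's approach buys the leaf-deletion reduction, which is of independent interest for relating designs on nested sets of alternatives. To make your write-up complete you should spell out the path-indicator verification above rather than citing effective resistance as a black box.
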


\begin{proof}
  Let $ \xi $ be a saturated, locally $D$-optimal design for the
  Bradley--Terry model with $m$ alternatives.  The graph
  representation of $\xi$ is a tree by Lemma~\ref{l:trees}.  Applying
  a suitable permutation of $[m]$ and Proposition~\ref{p:symmetric},
  we assume that $\xi$ has exactly one comparison that contains~$m$,
  that is, that $m$ is a leaf.  Let $F$ be the (square) matrix of the
  transposed regression vectors of the design points
	\[F=
	\begin{pmatrix}
	f(i_1,j_1)^{T} \\
	f(i_2,j_2)^{T} \\
	\vdots \\
	f(i_{m-2},j_{m-2})^{T}\\
	f(i_{m-1},m)^{T}\\
	\end{pmatrix},
	\]
	and define
	$Q = \text{diag}(\lambda_{i_1,j_1},\dots,\lambda_{i_{m-1},m})$ as a
	diagonal matrix of intensities and correspondingly
	$W=\text{diag}(w_{i_1,j_1},\dots,w_{i_{m-1},m})$ for the weights of
	the design points.  Then, the information matrix is
	$
	M(\xi,\beta)=F^{T}WQF,
	$
	and inserting this into \eqref{eq:directionalder}, we obtain the directional 
	derivatives for every $ 1\le i <j\le m-1 $ as
	\[
	\lambda_{ij}f(i,j)^{T}F^{-1}Q^{-1}W^{-1}F^{-T}f(i,j)-(m-1).
	\]
	If the design is $D$-optimal, this
	formula is non-positive for every $ 1\le i <j\le m-1 $.  Since all
	weights are equal to $\frac{1}{m-1}$ this is equivalent to
	\[
	\lambda_{ij}f(i,j)^{T}F^{-1}Q^{-1}F^{-T}f(i,j)\le 1.
	\]
	
	The proof is by downward induction.  To this end, we remove one
	alternative and its associated design point and show that the reduced
	design $\bar{\xi}$ is optimal on the reduced design space.  Without
	loss of generality we can assume that the optimal design has only one
	comparison $ (1,v) $ in which alternative $1$ is involved. We can also assume
	that $v=2$ using the $S_m$ symmetry and Proposition~\ref{p:symmetric}.
	We remove alternative~$1$.  Consider the Bradley--Terry model on the
	alternatives $\{2,\dots,m\}$.  Its information matrix is a
	product $\bar{F}\bar{W}\bar{Q}\bar{F}^{T}$, where $ \bar{W} $ and
	$ \bar{F} $ are the lower-right $(m-2)\times (m-2)$-submatrices of
	$ \frac{m-1}{m-2}W $ and $F$, respectively, and $\bar{Q}$ is the
	diagonal matrix of the reduced model's intensities
	$\bar{\lambda}_{ij}$.  Through our assumptions,
	\[
	F=\begin{pmatrix}
	1&-1&0&\dots&0 \\
	0&&&&\\
	\vdots&&\bar{F}&& \\   
	0&&&&\\ 
	\end{pmatrix}.
	\]
	We show the implication
	\begin{align*}
	\lambda_{ij}f(i,j)^{T}F^{-1}Q^{-1}F^{-T}f(i,j)&\le 1 \quad \text{for all $ 2\le i<j\le m $}\\
	\Rightarrow \bar{\lambda}_{ij}\bar{f}(i,j)^{T}\bar{F}^{-1}\bar{Q}^{-1}\bar{F}^{-T}\bar{f}(i,j)&\le 1 \quad \text{for all $ 2\le i<j\le m $}.
	\end{align*}
	This implies that the design $\bar{\xi}$ with equal weights
	$\frac{1}{m-2}$ on $E\setminus\{1,2\}$ is optimal for the reduced
	model.  Since $ \bar{\lambda}_{ij}=\lambda_{ij} $, we only have to
	show
	\begin{align}
	\bar{f}(i,j)^{T}\bar{F}^{-1}\bar{Q}^{-1}\bar{F}^{-T}\bar{f}(i,j)&\le f(i,j)^{T}F^{-1}Q^{-1}F^{-T}f(i,j)\label{eq:reduction}
	\end{align}
	for all $ 2\le i<j\le m $.  Now let
	\[ F^{-1}=\begin{pmatrix}
	a_{11}& a_{12}^{T}\\
	a_{21}& A_1\\
      \end{pmatrix}\]
    for some $(m-2)\times(m-2) $-matrix~$A_1$. This leads to
    $ \bar{F}^{-1}=A_1-\frac{1}{a_{11}}a_{21}a_{12}^{T} $.  It can be
    checked that $a_{21}=0$ and thus
	\[F^{-1}=\left(\begin{array}{ccccccc}
	1&a_{12}^{T} \\
	0&A_{1}\\
	\end{array}\right). \]
	This means, that $ \bar{F}^{-1}=A_1 $.
	Now, as $ f(i,j)^T=(0, \bar{f}(i,j)^{T}) $,
	\begin{align*}
	f(i,j)^{T}&F^{-1}Q^{-1}F^{-T}f(i,j)\\ 
	& = \begin{pmatrix}
	0&\bar{f}(i,j)^{T}
	\end{pmatrix}
	\begin{pmatrix}
	1& a_{12}^{T}\\
	0& A_1\\
	\end{pmatrix}\begin{pmatrix}
	\frac{1}{\lambda_{12}}&\\
	&\bar{Q}^{-1}\\
	\end{pmatrix}\begin{pmatrix}
	1& 0\\
	a_{12}& A_1^{T}\\
	\end{pmatrix}\begin{pmatrix}
	0\\
	\bar{f}(i,j)
	\end{pmatrix}\\
	&=\begin{pmatrix}
	0  & \bar{f}(i,j)^{T} A_1
	\end{pmatrix}\begin{pmatrix}
	\frac{1}{\lambda_{12}}&\\
	&\bar{Q}^{-1}\\
	\end{pmatrix}\begin{pmatrix}
	0  \\
	A_1^{T} \bar{f}(i,j)\\
	\end{pmatrix}\\
	&=\bar{f}(i,j)^{T}A_1\bar{Q}^{-1} A_1^{T}\bar{f}(i,j)\\
	&=\bar{f}(i,j)^{T}\bar{F}^{-1}\bar{Q}^{-1} \bar{F}^{-T}\bar{f}(i,j).\\
	\end{align*}
	In fact, \eqref{eq:reduction} is realized as an equality and the
	reduced saturated design is optimal.
	Now if $\xi$ was not a path, iterating this procedure
        eventually leads to an optimal saturated design for the
        Bradley--Terry model on four alternatives that is not a path.
        Such a design does not exist by the explicit computations in
        Section~\ref{s:fouralternatives}.  Hence, the graph
        representation of a saturated, locally $D$-optimal design is a
        path.
\end{proof}

\section{Optimality Regions of saturated designs}
\label{s:optregion}

We now describe the sets of parameters for which a saturated design
from Theorem~\ref{t:path} is optimal.  We call such a set the
\emph{region of optimality} of the design.  Knowing these regions
simplifies the experimental design problem since it can be combined
with prior knowledge about the parameters (e.g.~from a screening
experiment).  Also, knowing if the regions are big or small yields
information about the robustness of designs.

Exploiting the symmetry in Proposition~\ref{p:symmetric}, it suffices
to study a single design representing all saturated designs.  This is
the path $(1,2),\allowbreak(2,3),\dots,\allowbreak (m-1,m)$.

\begin{lemma} \label{l:saturatedpath} The optimality region of the
  design $ (12,23,34,\ldots,(m-1)m) $ is defined by the inequalities
\[
  g(i,j)=\lambda_{ij}\sum_{k=i}^{j-1}\frac{1}{\lambda_{k(k+1)}} \le 1,
  \qquad 1\le i<m,\, i<j\le m.
\]
Furthermore, this region is not empty.
\end{lemma}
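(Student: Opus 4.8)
The plan is to compute the quadratic forms in the optimality criterion explicitly for the path design and then exhibit a parameter vector in the resulting region. For the path $(12,23,\ldots,(m-1)m)$ the matrix $F$ from the proof of Theorem~\ref{t:path} is upper bidiagonal: its $k$-th row is $f(k,k+1)^T = e_k^T - e_{k+1}^T$ for $k \le m-2$, while its last row is $f(m-1,m)^T = e_{m-1}^T$. Thus $F$ has ones on the diagonal and $-1$ on the superdiagonal, and I would first record that its inverse is the upper-triangular all-ones matrix, $(F^{-1})_{pq} = 1$ for $p \le q$ and $0$ otherwise; this is a one-line check via the telescoping identity $[p\le q]-[p+1\le q]=[p=q]$.

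Writing $\mu_k := 1/\lambda_{k(k+1)}$, the diagonal matrix $Q^{-1}$ equals $\mathrm{diag}(\mu_1,\ldots,\mu_{m-1})$. The central step is to evaluate $B := F^{-1}Q^{-1}F^{-T}$. Using the all-ones structure of $F^{-1}$, its entries are the tail sums $B_{pq} = \sum_{r=\max(p,q)}^{m-1}\mu_r$. I would then substitute the two shapes of regression vector. For $j<m$, where $f(i,j)=e_i-e_j$, the quadratic form is $B_{ii}-2B_{ij}+B_{jj}$; since $i<j$ forces $\max(i,j)=j$, the tail sums starting at $j$ cancel, leaving the telescoped sum $\sum_{k=i}^{j-1}\mu_k$. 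For $j=m$, where $f(i,m)=e_i$, the quadratic form is simply $B_{ii}=\sum_{k=i}^{m-1}\mu_k$, which is the same formula read with $j=m$. Multiplying by $\lambda_{ij}$ and invoking the simplified Kiefer--Wolfowitz condition $\lambda_{ij}f(i,j)^TF^{-1}Q^{-1}F^{-T}f(i,j)\le 1$ established in the proof of Theorem~\ref{t:path}, this is exactly $g(i,j)\le 1$.

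For non-emptiness I would parametrize by the consecutive gaps $d_k:=\beta_k-\beta_{k+1}$, so that $\beta_i-\beta_j=\sum_{k=i}^{j-1}d_k$ and, writing $\lambda(z)=\eta'(z)=1/(4\cosh^2(z/2))$, one has $\lambda_{k(k+1)}=\lambda(d_k)$ and $\lambda_{ij}=\lambda(\beta_i-\beta_j)$. Choosing all gaps equal to a common value $d>0$ gives $g(k,k+1)=1$ on the design edges, while for a non-consecutive pair with $n:=j-i\ge 2$ one computes
\[
g(i,j)=n\,\frac{\lambda(nd)}{\lambda(d)}=n\,\frac{\cosh^2(d/2)}{\cosh^2(nd/2)}.
\]
Since $\cosh(x)\sim \tfrac12 e^{x}$ as $x\to\infty$, the ratio behaves like $n\,e^{-(n-1)d}$, which tends to $0$ for every fixed $n\ge 2$. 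Hence there is a threshold beyond which all $g(i,j)\le 1$ hold simultaneously, so the region is non-empty.

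I expect the genuine content to sit in the matrix computation: recognizing that the path produces a bidiagonal $F$ with all-ones inverse, and that conjugating $Q^{-1}$ by it yields precisely the tail-sum matrix whose quadratic form telescopes to $\sum_{k=i}^{j-1}\mu_k$ uniformly in the two cases $j<m$ and $j=m$. The non-emptiness is then a routine large-gap asymptotic; the only mild subtlety there is to note that the edge inequalities hold automatically with equality (consistent with Corollary~\ref{c:silvey}), so that only the $n\ge 2$ constraints need to be driven below $1$.
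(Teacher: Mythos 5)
Your proposal is correct and follows essentially the same route as the paper: the same bidiagonal $F$ with upper-triangular all-ones inverse, the same telescoping of the quadratic form to $\sum_{k=i}^{j-1}1/\lambda_{k(k+1)}$ (you organize it via the Gram matrix $F^{-1}Q^{-1}F^{-T}$ of tail sums rather than via the indicator row vectors $f(i,j)^TF^{-1}$, which is only a cosmetic difference), and the same non-emptiness witness of equally spaced $\beta_i$ (equivalently $\pi_i=\pi_1^i$) with a large common gap driving the $j-i\ge 2$ constraints to zero. No gaps.
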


\begin{proof}
	We apply Theorem \ref{t:silvey} to find
	the optimality regions of the design $ (12,23,34,\ldots,(m-1)m)$. Therefore, one
	has to analyze the directional derivatives
	$
	f(i,j)^{T}F^{-1}Q^{-1}F^{-T}f(i,j)-(m-1),
	$
	where $f(i,j)$ are the regression vectors, $Q$ is a diagonal matrix of
	the design intensities
	$\lambda_{12}$,$\lambda_{23}$, \ldots, $\lambda_{(m-1)m}$ and $ F $ is the
	matrix of the transposed regression vectors.  So,
	\begin{align*}
	F &=\begin{pmatrix}
	1&&-1&& &&\\
	&&\ddots&&\ddots&&\\
	&& &&1&&-1\\
	&& && &&1\\
	\end{pmatrix} \qquad \text{and} \qquad
	F^{-1}=\begin{pmatrix}
	1&&\ldots&&1\\
	&&\ddots&&\vdots\\
	&& &&1\\
	\end{pmatrix}.
	\end{align*}
	For $ i<j<m $, we have $ f(i,j)=e_{i}-e_j $. This leads to
	\begin{align*}
	f(i,j)^{T}F^{-1}=(\mathbbm{1}_{\{i= 1\}},\mathbbm{1}_{\{i\le 2< j\}},\mathbbm{1}_{\{i\le 3< j\}},\ldots,\mathbbm{1}_{\{i\le m-2< j\}},0).
	\end{align*}
	For $ i<j=m $, we have $ f(i,m)=e_i $. So
	\begin{align*}
	f(i,m)^{T}F^{-1}=(\mathbbm{1}_{\{i= 1\}},\mathbbm{1}_{\{i\le 2\}},\mathbbm{1}_{\{i\le 3\}},\ldots,\mathbbm{1}_{\{i\le m-2\}},1).
	\end{align*}
	This means that the directional derivative in the direction $ (i,j) $ for $ j<m $ is
	\begin{align*}
	\lambda_{ij}(m-1)&(\mathbbm{1}_{\{i\le 1\}},\mathbbm{1}_{\{i\le 2<
		j\}},\ldots,\mathbbm{1}_{\{i\le m-2< j\}},0)
	 \begin{pmatrix}
	\frac{1}{\lambda_{12}}&&&&\\
	&\frac{1}{\lambda_{23}}&&&\\
	&&\ddots&&\\
	&&&\ddots&\\
	&&&&\frac{1}{\lambda_{(m-1)m}}\\
	\end{pmatrix}
	\begin{pmatrix}
	\mathbbm{1}_{\{i\le 1\}}\\
	\mathbbm{1}_{\{i\le 2< j\}}\\
	\vdots\\
	\mathbbm{1}_{\{i\le m-2< j\}}\\
	0\\
	\end{pmatrix}\\
	&=\lambda_{ij}(m-1)\sum_{k=1}^{m-2}\frac{\mathbbm{1}_{\{i\le k< j\}}}{\lambda_{k(k+1)}}\\
	&=\lambda_{ij}(m-1)\sum_{k=i}^{j-1}\frac{1}{\lambda_{k(k+1)}}\\
	\end{align*}
	and for $ j=m $ is
	\begin{align*}
	\lambda_{im}(m-1)&(\mathbbm{1}_{\{i\le 1\}},\mathbbm{1}_{\{i\le
		2\}},\ldots,\mathbbm{1}_{\{i\le m-2\}},1)	 \begin{pmatrix}
	\frac{1}{\lambda_{12}}&&&&\\
	&\frac{1}{\lambda_{23}}&&&\\
	&&\ddots&&\\
	&&&\ddots&\\
	&&&&\frac{1}{\lambda_{(m-1)m}}\\
	\end{pmatrix}
	\begin{pmatrix}
	\mathbbm{1}_{\{i\le 1\}}\\
	\mathbbm{1}_{\{i\le 2\}}\\
	\vdots\\
	\mathbbm{1}_{\{i\le m-2\}}\\
	1\\
	\end{pmatrix}\\
	&=\lambda_{im}(m-1)\sum_{k=1}^{m-1}\frac{\mathbbm{1}_{\{i\le k\}}}{\lambda_{k(k+1)}}\\
	&=\lambda_{im}(m-1)\sum_{k=i}^{m-1}\frac{1}{\lambda_{k(k+1)}}.
	\end{align*}
	For $ j=i+1 $ the directional derivatives are $0$ by
	\eqref{eq:directionalder} and Corollary~\ref{c:silvey}.  Let
	\[g(i,j)=\lambda_{ij}\sum_{k=i}^{j-1}\frac{1}{\lambda_{k(k+1)}}.\]
        By Theorem~\ref{t:silvey} the optimality region of the design
        $ (12, 23, 34, \ldots, (m-1)m) $ is cut out by the
        inequalities $g(i,j)\le 1$ for $1\le i<j\le m$.

        To exhibit a point in the optimality region, let
        $ \beta_i= i \beta_1$ and thus $\pi_i=\pi_1^i$. This implies
	\[\lambda_{ij}=\frac{\pi_1^{j-i}}{(1+\pi_1^{j-i})^2},\]
	and therefore
	\begin{align*}
	g(i,j) =\frac{\pi_1^{j-i}}{(1+\pi_1^{j-i})^2}\sum_{k=i}^{j-1}
	\frac{(1+\pi_1)^2}{\pi_1} 
	=\frac{(j-i)\pi_1^{j-i-1}(1+\pi_1)^2}{(1+\pi_1^{j-i})^2},
	\end{align*}
	which is at most $ 1 $ for all $ 1\le i<j\le m $ if just
	$\pi_1$ is sufficiently large.
\end{proof}

\begin{thm}\label{t:regions}
The optimality regions of all saturated designs corresponding to
paths, i.e.~of all optimal saturated designs, are in the $S_m$-orbit
of the saturated design for $(12,23,34,\ldots,(m-1)m)$.  The
optimality regions are defined by the inequalities
\[
\{g(\sigma(i),\sigma(j))\le 1 \,:\, 1\le i<m,
\,i<j\le m \}.
\]
where $\sigma\in S_m$ is a permutation turning $(12,23,34,\ldots,(m-1)m)$
into the given path.
\end{thm}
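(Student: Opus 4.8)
The plan is to reduce everything to the canonical path using the symmetry isolated in Proposition~\ref{p:symmetric} together with the explicit region computed in Lemma~\ref{l:saturatedpath}. The first assertion is purely combinatorial. By Theorem~\ref{t:path} the graph representation of every optimal saturated design is a path, and since a saturated design has $m-1$ support points it is a spanning tree, hence a Hamiltonian path $v_1 v_2\cdots v_m$ on $[m]$. Such a path is nothing but an ordering of the vertex set, so there is $\sigma\in S_m$ with $\sigma(k)=v_k$, i.e.\ the given path equals $\xi_0^\sigma$, where $\xi_0$ denotes the saturated design on the canonical path $(12,23,34,\ldots,(m-1)m)$. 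Thus all optimal saturated designs lie in the single $S_m$-orbit of $\xi_0$, which is the first sentence of the statement.

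For the regions, fix a path $P$ and a permutation $\sigma$ with $\xi_0^\sigma=P$. By Proposition~\ref{p:symmetric}, $\xi_0$ is $D$-optimal for $\beta$ if and only if $P=\xi_0^\sigma$ is $D$-optimal for $Q_\sigma^{-T}\beta$. Hence the region of optimality of $P$ is exactly the image under the invertible linear map $\beta\mapsto Q_\sigma^{-T}\beta$ of the region of $\xi_0$, which by Lemma~\ref{l:saturatedpath} is $\{g(i,j)\le 1: 1\le i<m,\ i<j\le m\}$. Because this map is a bijection of parameter space, non-emptiness of the transported region is immediate from the non-emptiness asserted in Lemma~\ref{l:saturatedpath}.

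It remains to put the transported inequalities into the claimed closed form, and this is where the only genuine work lies. The key point is that intensities transform covariantly under the symmetry: since $\lambda_{ij}(\beta)=\eta'(f(i,j)^T\beta)$ depends on $\beta$ only through the linear predictor and $Q_\sigma f(i,j)=f(\sigma(i),\sigma(j))$, one computes for $\beta=Q_\sigma^T\beta'$ that
\[
\lambda_{ij}(\beta)=\eta'\bigl(f(i,j)^T Q_\sigma^T\beta'\bigr)=\eta'\bigl(f(\sigma(i),\sigma(j))^T\beta'\bigr)=\lambda_{\sigma(i)\sigma(j)}(\beta').
\]
Substituting this, term by term, into the defining inequality $g(i,j)(\beta)=\lambda_{ij}(\beta)\sum_{k=i}^{j-1}1/\lambda_{k(k+1)}(\beta)\le 1$ turns it into
\[
\lambda_{\sigma(i)\sigma(j)}(\beta')\sum_{k=i}^{j-1}\frac{1}{\lambda_{\sigma(k)\sigma(k+1)}(\beta')}\le 1,
\]
which is precisely $g(\sigma(i),\sigma(j))\le 1$ for the path $P$, the summation now running over the edges $\{\sigma(k),\sigma(k+1)\}$ of $P$ between $\sigma(i)$ and $\sigma(j)$. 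As $(i,j)$ ranges over $1\le i<m,\ i<j\le m$ this yields exactly the stated family, the edge cases $j=i+1$ reducing to the trivial equalities guaranteed by Corollary~\ref{c:silvey}.

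The main obstacle I anticipate is bookkeeping rather than anything conceptual. One must verify that the reparametrization $\beta=Q_\sigma^T\beta'$ respects the index conventions, in particular that the normalization $\beta_m=0$ is preserved; this is exactly what the special choice of $Q_\sigma$ for the transposition $(im)$ in the proof of Proposition~\ref{p:symmetric} secures. One must also check that the interval of summation indices $\{i,\ldots,j-1\}$ maps under $\sigma$ to the correct set of edges of $P$, so that the transported inequality really describes the path distance in $P$. Once the covariance identity for $\lambda$ is in place, no estimation is required and the argument closes.
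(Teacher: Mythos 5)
Your proposal is correct and follows essentially the same route as the paper: Theorem~\ref{t:path} to reduce to paths, Proposition~\ref{p:symmetric} to pass to the canonical representative $(12,23,\ldots,(m-1)m)$, and Lemma~\ref{l:saturatedpath} for the explicit inequalities. The paper's own proof is just these three citations; your explicit verification of the covariance $\lambda_{ij}(Q_\sigma^T\beta')=\lambda_{\sigma(i)\sigma(j)}(\beta')$ is a welcome detail it leaves implicit.
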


\begin{proof}
	Theorem~\ref{t:path} shows that the saturated optimal designs
	correspond to paths. By Proposition~\ref{p:symmetric}, we can choose
	any representative for the orbit of path designs.  We choose
	$(12,23,34,\ldots,(m-1)m)$ and plug in the results from Lemma \ref{l:saturatedpath}.
\end{proof}

\section{Explicit solutions for four alternatives}\label{s:fouralternatives}
This section studies the optimal designs for the Bradley--Terry paired
comparison model with four alternatives, as it arises for example
in~\cite{gabrielsen2000paired}.  We first deal with the case of
saturated designs, i.e.\ optimal designs whose supports consist of
only 3 design points.  The unsaturated case with 4, 5 or 6 support
points follows in Section~\ref{s:unsaturated4}.

The Bradley--Terry paired comparison model with $4$ alternatives has
$3$ identifiable parameters $\beta_1,\beta_2,\beta_3$.  As above we
use $\beta_i:=\log(\pi_i)$ and $\beta_4=0$.  Our goal is to cover all
of $\RR^3$ with regions of optimality of specific explicit designs.
The regression vectors for four alternatives are
\begin{align*}
f(1,2)&= (1, -1, 0)^{T},&		f(1,3) &= (1, 0, -1)^{T},&    f(1,4)&= (1, 0, 0)^{T},\\		f(2,3) &= (0, 1, -1)^{T}&       f(2,4)&= (0, 1, 0)^{T},&	f(3,4) &= (0, 0, 1)^{T}.
\end{align*}

\subsection{Saturated Designs }
For saturated designs with non-singular information matrices, the
optimality criterion in Theorem~\ref{t:silvey} yields a system of
inequalities in the intensities $\lambda_{ij}$.  We find these first.
According to \cite[Lemma 5.1.3]{silvey1980optimal}, a saturated design
has three positive weights whose values are all~$\tfrac{1}{3}$, the
remaining weights being zero.  There are $ \binom{6}{3}=20$ possible
saturated designs.  Exactly $16$ of them have a non-singular
information matrix.  Among the $16$, only $12$ have a non-empty region
of optimality.  We find that they are in bijection with the paths on 4
vertices.  The following theorem is the base case to which the proof
of Theorem~\ref{t:path} reduces.

\begin{thm}\label{thm:paths4} For the Bradley--Terry model with four
alternatives there are $20$ saturated designs.  Among those
\begin{itemize}
\item $8$ have an empty region of optimality.
\item $12$ have optimal experimental designs.
\end{itemize}
The $12$ designs with non-empty region of optimality correspond to the
$12$ labelings of the path $P_4$.  The region of optimality of the
path $(1,2),(2,3),(3,4)$ is constrained by
\begin{align}
\lambda_{13}(\lambda_{12}+\lambda_{23})-\lambda_{12}\lambda_{23}&\le 0,\nonumber\\
\lambda_{24}(\lambda_{23}+\lambda_{34})-\lambda_{23}\lambda_{34}&\le 0, \label{ineq1} \\
\lambda_{14}(\lambda_{12}\lambda_{23}+\lambda_{12}\lambda_{34}+\lambda_{23}\lambda_{34})-\lambda_{12}\lambda_{23}\lambda_{34}&\le 0. \nonumber
\end{align}
The regions of optimality for other paths arise from this by
relabeling.
\end{thm}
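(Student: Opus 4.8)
The plan is to organize the proof around the graph representation (Definition~\ref{d:graphrepresentation}) of each saturated design and to treat the three structural types separately. First I would enumerate: a saturated design chooses $3$ of the $\binom{4}{2}=6$ pairs, giving the $\binom{6}{3}=20$ designs, and its graph representation is a $3$-edge subgraph of the complete graph on $\{1,2,3,4\}$. Such a subgraph is either a triangle (three edges among three vertices, leaving one isolated vertex), a star $K_{1,3}$, or a path $P_4$. Counting labelings gives $4$ triangles, $4$ stars, and $12$ paths, accounting for all $20$. The three regimes are then handled by (i) a singularity argument for triangles, (ii) Lemma~\ref{l:saturatedpath} together with the symmetry of Proposition~\ref{p:symmetric} for paths, and (iii) an infeasibility argument for stars, which is the crux and the base case into which Theorem~\ref{t:path} reduces.

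For the $4$ triangles, the missing vertex makes the information matrix singular exactly as in the proof of Lemma~\ref{l:trees}: the row and column indexed by the isolated alternative vanish, so $\det M(\xi,\beta)=0$ and the design can never be $D$-optimal. Hence these four regions are empty. The remaining $16$ designs are the spanning trees, and for each of them the regression vectors of the edges are linearly independent (the reduced incidence matrix of a connected graph is invertible), so $F$ is invertible; this matches the claimed count of $16$ non-singular information matrices.

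For the $12$ paths, non-emptiness is immediate: Lemma~\ref{l:saturatedpath} exhibits an explicit parameter (e.g.\ $\pi_i=\pi_1^{\,i}$ with $\pi_1$ large) in the optimality region of the reference path, and Proposition~\ref{p:symmetric} carries this to all $12$ labelings. To obtain the explicit system~\eqref{ineq1}, I would apply the Kiefer--Wolfowitz criterion (Theorem~\ref{t:silvey}) to the chosen path: build $F$ from the three edge regression vectors, invert it, and evaluate $\lambda_{ij}f(i,j)^{T}F^{-1}Q^{-1}F^{-T}f(i,j)\le 1$ at the three non-edges. Clearing the common denominator $\prod_{e}\lambda_{e}$ over the edges $e$ on the tree-path joining $i$ and $j$ turns each rational inequality $\lambda_{ij}\sum_{e}\lambda_{e}^{-1}\le 1$ into the stated polynomial inequality; this is a short and direct linear-algebra computation.

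The main obstacle is showing that the $4$ stars have empty regions despite having non-singular information matrices. By Proposition~\ref{p:symmetric} it suffices to treat the star centered at $m=4$, with edges $(1,4),(2,4),(3,4)$, for which $F$ is the identity and the non-edge inequalities are $\lambda_{ij}\big(\lambda_{i4}^{-1}+\lambda_{j4}^{-1}\big)\le 1$ for $\{i,j\}\subset\{1,2,3\}$. Writing the intensity as $\lambda(d)=\big(4\cosh^{2}(d/2)\big)^{-1}$ and using $\beta_4=0$, each such inequality becomes $\cosh^{2}(\beta_i/2)+\cosh^{2}(\beta_j/2)\le\cosh^{2}\!\big((\beta_i-\beta_j)/2\big)$. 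Expanding via $\cosh(\beta_i/2-\beta_j/2)=\cosh(\beta_i/2)\cosh(\beta_j/2)-\sinh(\beta_i/2)\sinh(\beta_j/2)$ and simplifying with $\cosh^{2}=1+\sinh^{2}$, I expect this to reduce to $-2\sinh(\beta_i/2)\sinh(\beta_j/2)\cosh\!\big((\beta_i-\beta_j)/2\big)\ge 1$; since the $\cosh$ factor is positive, this forces $\sinh(\beta_i/2)\sinh(\beta_j/2)<0$, i.e.\ $\beta_i$ and $\beta_j$ must have strictly opposite signs. The three non-edges would then require all three pairs among $\beta_1,\beta_2,\beta_3$ to be of opposite sign, which is impossible by pigeonhole. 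Hence the star's region is empty, and by symmetry so are all four, giving exactly $8$ empty and $12$ non-empty regions as claimed. Should the clean sign reduction prove elusive, the same infeasibility can instead be certified by real quantifier elimination on the polynomial system in the $\lambda_{ij}$ subject to the relations among the intensities, which is how such steps are verified computationally.
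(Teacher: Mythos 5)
Your decomposition of the $20$ saturated designs into $4$ triangles, $4$ claws $K_{1,3}$ and $12$ paths is exactly the paper's (Sections~\ref{s:paths4}--\ref{s:singular4}), and the triangle (singularity) and path (non-emptiness via Lemma~\ref{l:saturatedpath}, inequalities via Theorem~\ref{t:silvey} at the three non-edges) steps coincide in substance with what the paper does; the only cosmetic difference there is that the paper extracts \eqref{ineq1} with \Mathematica's \texttt{Reduce}, whereas you obtain the same inequalities by clearing denominators in $\lambda_{ij}\sum_{e}\lambda_e^{-1}\le 1$ over the edges of the tree-path joining $i$ and $j$, which indeed reproduces \eqref{ineq1} for the representative with edge set $\{12,13,24\}$ used in Section~\ref{s:paths4}. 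Where you genuinely depart from the paper is the claw case, which is the crux: the paper converts the three non-edge conditions into a polynomial system in $\pi_1,\pi_2,\pi_3$, certifies its infeasibility purely by computer algebra, and explicitly laments the lack of a short certificate (their SOStools attempts failed). Your argument supplies a human-checkable one. Writing $\lambda(d)=\bigl(4\cosh^{2}(d/2)\bigr)^{-1}$ and taking the star centered at $4$ (so $F$ is the identity), the non-edge condition $\lambda_{ij}(\lambda_{i4}^{-1}+\lambda_{j4}^{-1})\le 1$ becomes $\cosh^{2}(\beta_i/2)+\cosh^{2}(\beta_j/2)\le\cosh^{2}\bigl((\beta_i-\beta_j)/2\bigr)$, and the identity
\begin{equation*}
\cosh^{2}(a-b)-\cosh^{2}a-\cosh^{2}b \;=\; -1-2\sinh a\,\sinh b\,\cosh(a-b)
\end{equation*}
(easily verified by expanding with $\cosh^{2}=1+\sinh^{2}$) turns it into $-2\sinh(\beta_i/2)\sinh(\beta_j/2)\cosh\bigl((\beta_i-\beta_j)/2\bigr)\ge 1$, which forces $\beta_i\beta_j<0$; requiring this for all three pairs among $\beta_1,\beta_2,\beta_3$ is impossible by pigeonhole. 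I verified this computation and it is correct, so your route is not merely an alternative but an improvement: it removes the black-box computer step and answers, in the transcendental $\beta$-coordinates, the certificate question the paper raises at the end of Section~\ref{s:claws4} (it is not, of course, a Positivstellensatz certificate for the polynomial system in the $\pi_i$, which is what SOStools would produce). The only caution is to make sure your appeal to Lemma~\ref{l:saturatedpath} is non-circular, which it is, since that lemma is proved directly from Theorem~\ref{t:silvey} and does not rely on Theorem~\ref{thm:paths4}.
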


\tikzset{%
    line width=2mm
}
\begin{figure}[ht]
\centering
\subfloat{
\begin{tikzpicture}[line width=.6mm]
 \node[shape=circle,draw=black] (1) at (-1,1) {1};
  \node[shape=circle,draw=black] (2) at (1,1) {2};
   \node[shape=circle,draw=black] (3) at (-1,-1) {3};
    \node[shape=circle,draw=black] (4) at (1,-1) {4};
    \path (1) edge (2);
    \path (2) edge (3);
    \path (3) edge (4);
\end{tikzpicture}}
\qquad\qquad
\subfloat{
\begin{tikzpicture}[line width=.6mm]
 \node[shape=circle,draw=black] (1) at (-1,1) {1};
  \node[shape=circle,draw=black] (2) at (1,1) {2};
   \node[shape=circle,draw=black] (3) at (-1,-1) {3};
    \node[shape=circle,draw=black] (4) at (1,-1) {4};
    \path (1) edge (2);
    \path (1) edge (3);
    \path (1) edge (4);
\end{tikzpicture}}
\qquad\qquad
\subfloat{
\begin{tikzpicture}[line width=.6mm]
 \node[shape=circle,draw=black] (1) at (-1,1) {1};
  \node[shape=circle,draw=black] (2) at (1,1) {2};
   \node[shape=circle,draw=black] (3) at (-1,-1) {3};
    \node[shape=circle,draw=black] (4) at (1,-1) {4};
    \path (1) edge (2);
    \path (1) edge (4);
    \path (2) edge (4);
\end{tikzpicture}}

\caption{\label{fig:graphs4}
Graph representations of different $ 3 $-point designs.}
\end{figure}
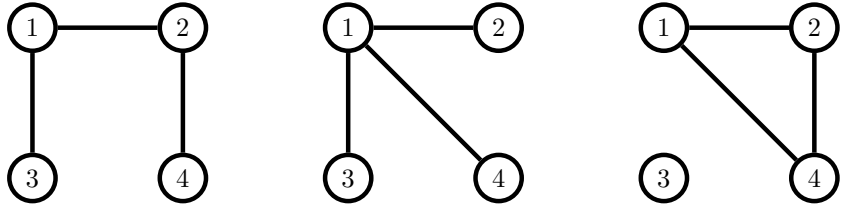

Since the $D$-optimality criterion is equivariant under the $S_4$ action
by Proposition~\ref{p:symmetric}, it suffices to study one labeling for
each unlabeled graph with three edges on four vertices.  The proof of
Theorem~\ref{thm:paths4} is split into a discussion of information
matrices for the three graphs in Figure~\ref{fig:graphs4}.

\subsubsection{Paths}\label{s:paths4}
Consider the path in Figure~\ref{fig:graphs4}.  Its edge set is
$\{(1,2),(2,3),(3,4)\}$.  A corresponding saturated design can only be
optimal if its weights are $ w_{12}=w_{23}=w_{34}=\frac{1}{3} $ and
$w_{13}=w_{14}=w_{24}=0$.  The information matrix of this design is
\begin{align*}
M&=\frac{1}{3}(\lambda_{12} f(1,2)f(1,2)^{T}+\lambda_{23} f(2,3)f(2,3)^{T}+\lambda_{34} f(3,4)f(3,4)^{T})\\
&=\frac{1}{3}\left(
\begin{array}{ccc}
 \lambda_{12} & -\lambda_{12} & 0 \\
 -\lambda_{12} & \lambda_{12}+ \lambda_{23} & -\lambda_{23} \\
 0 & -\lambda_{23} & \lambda_{23}+\lambda_{34} \\
\end{array}
\right) .
\end{align*}
We apply Theorem \ref{t:silvey}. The directional derivatives are
\[g_{ij}(\lambda) := \lambda_{ij}f(i,j)^T M^{-1} f(i,j)-3 .\]
The region of optimality is
\[
\{ \lambda \in \RR^{\mathcal{X}}_{> 0} : g_{ij}(\lambda) \le 0,\; 1\le i < j
\le 4 \}.
\]
This region is a semi-algebraic set, that is, defined constructively
by polynomial inequalities.  To see this we use \Mathematica.
Corollary~\ref{c:silvey} simplifies the description because it says
that for design points with positive weights the conditions become
equations, and those equations have no free variables, as the weights
in a saturated design are fixed.  Using \Mathematica{}'s \verb|Reduce|
functionality we derived~\eqref{ineq1}.

The inequalities in \eqref{ineq1} can be compared to
\cite[Theorem~2]{Grasshoff2008}.  The structure is similar, but for
four alternatives a cubic inequality appears.  For more alternatives
even higher degree inequality constraints appear according to
Theorem~\ref{t:regions}.  These conditions can be expressed in
$\beta$-coordinates.  The resulting regions of optimality are
displayed in Figure~\ref{fig:redblue} on the left.

\begin{figure}[ht]
\centering
\includegraphics[scale=0.6]{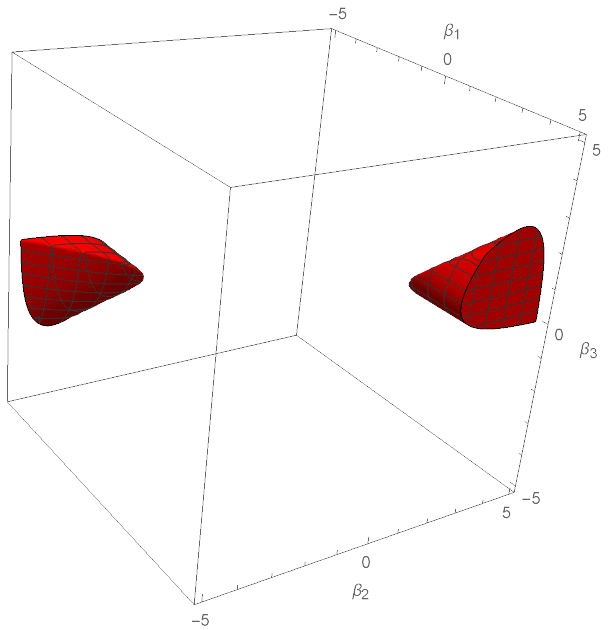}
\includegraphics[scale=0.5]{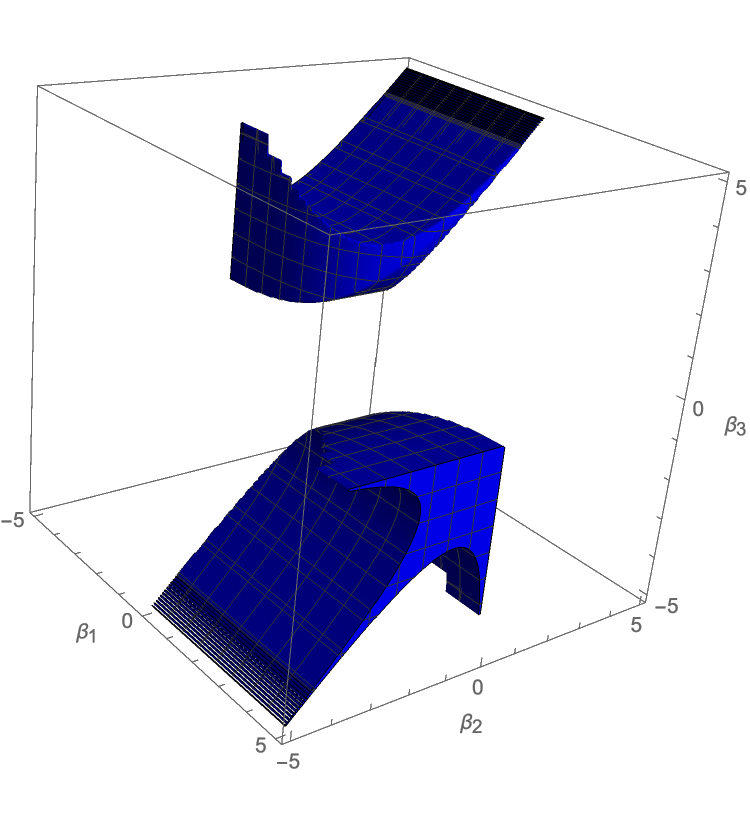}
\caption{\label{fig:redblue} Optimality regions for the saturated
design on $(12,23,34)$ on the left and $4$-point designs with
$w_{12}=w_{13}=0$ on the right.}
\end{figure}

\subsubsection{The claw graph \texorpdfstring{$K_{1,3}$}{K13}}\label{s:claws4}
We now show that the graph in the middle of Figure~\ref{fig:graphs4},
sometimes known as a \emph{claw}, leads to an empty region of
optimality.  After symmetry reduction it suffices to show that the
design $(12,13,14)$ cannot be $D$-optimal.  This design would be
optimal in the following region given by the three directional
derivatives corresponding to the non-edges $(23,24,34)$:
\[
\lambda _{23}\leq \frac{\lambda _{12} \lambda _{13}}{\lambda
_{12}+\lambda _{13}}\land \lambda _{24}\leq \frac{\lambda _{12}
\lambda _{14}}{\lambda _{12}+\lambda _{14}}\land \lambda _{34}\leq
\frac{\lambda _{13} \lambda _{14}}{\lambda _{13}+\lambda _{14}}.
\]
Plugging in the formulas for the $\lambda_{ij}$ in terms of the
$\pi_i$ this becomes
\begin{align*}
&(\pi_2+\pi_3) \left(\pi_1^2+\pi_2 \pi_3\right)\leq \pi_1 (\pi_2-\pi_3)^2,\\
& (\pi_2+1) \left(\pi_1^2+\pi_2\right)\leq \pi_1 (\pi_2-1)^2,\\
& (\pi_3+1) \left(\pi_1^2+\pi_3\right)\leq \pi_1 (\pi_3-1)^2.
\end{align*}
Using \Mathematica, we find that these conditions are incompatible
with $\pi_1>0,\pi_2>0,\pi_3>0$.  It would be interesting to find a
short certificate for the infeasibility of this system.  Such a
certificate always exists by the Positivstellensatz from real
algebraic geometry (see \cite{Bochnak}).  This means that if the
inequality system has no solution, then one can combine the
inequalities to produce an explicit contradiction.  There are
computational tools to search for such certificates, but our attempts
with SOStools~\cite{sostools} were not successful.

\subsubsection{Singular designs}\label{s:singular4}
Designs corresponding to the rightmost graph in
Figure~\ref{fig:graphs4} have singular information matrices and can
thereby not be $ D $-optimal.

\begin{proof}[Proof of Theorem~\ref{thm:paths4}]
Since there are 12 distinct labelings of the path on four vertices,
the theorem follows from the computations in 
Sections~\ref{s:paths4}--\ref{s:singular4}.
\end{proof}

\subsection{Unsaturated Designs}
\label{s:unsaturated4}
We now examine the designs whose support contains at least four pairs.
In this case the weights $w_{ij}$ of an optimal design are not
necessarily uniform.  Instead we find formulas that express the
weights in terms of the parameters.  These formulas might look
complicated, but they are very symmetric and can easily be handled by
computer algebra systems.
Our approach is again via Theorem~\ref{t:silvey}: optimality of a
design $\xi^*$ is equivalent to
\begin{align}\label{eq:directionalderivative}
  \lambda_{ij}f(i,j)^T M(\xi^*,\beta)^{-1} f(i,j)-3 \le 0,\qquad 1\leq
  i<j\le 4.
\end{align}
Furthermore, by Corollary~\ref{c:silvey}, there is equality for any
pair $i,j$ such that $w_{ij} > 0$ in $\xi^*$.  We distinguish cases
according to the size of the support.

\subsubsection{Full support}\label{s:fourpointfullsupport}
Full support means that all weights of a design are positive.
Then all inequalities \eqref{eq:directionalderivative} hold with
equality and we have a system of 6 equations in the variables
$ w_{ij},\lambda_{ij} $ for $ 1\le i <j\le 4 $.  We used \Mathematica
to solve the system and to express the weights $ w_{ij} $ as functions
of the intensities $\lambda_{ij}$:
\begin{multline*}
  w_{ij} =\frac{1}{A}(\lambda_{ik} \lambda _{il} \lambda _{jk} \lambda _{jl} (\lambda _{ij} \lambda _{ik} \lambda _{il} \lambda _{jk} \lambda _{jl}-\lambda _{ij} \lambda _{ik} \lambda _{il} \lambda _{jk} \lambda _{kl}-\lambda _{ij} \lambda _{ik} \lambda _{il} \lambda _{jl} \lambda _{kl}
  -\lambda _{ij} \lambda _{ik} \lambda _{jk} \lambda _{jl} \lambda _{kl}\\
 +\lambda _{ij} \lambda _{ik} \lambda _{jk} \lambda
   _{kl}^2-\lambda _{ij} \lambda _{ik} \lambda _{jl} \lambda
   _{kl}^2-\lambda _{ij} \lambda _{il} \lambda _{jk} \lambda _{jl}
   \lambda _{kl}-\lambda _{ij} \lambda _{il} \lambda _{jk} \lambda
   _{kl}^2
   +\lambda _{ij} \lambda _{il} \lambda _{jl} \lambda _{kl}^2+2
   \lambda _{ik} \lambda _{il} \lambda _{jk} \lambda _{jl} \lambda
   _{kl})),
\end{multline*}
where $ (i,j,k,l) $ is any permutation of $ (1,2,3,4) $. The term $ A
$ is the normalization that ensures $\sum_{i<j}w_{ij} = 1$.
\begin{multline*}
  A  = 3 (\lambda_{ij} \lambda_{ik}^2 \lambda_{il}^2 \lambda_{jk}^2
      \lambda_{jl}^2+\lambda_{ij} \lambda_{ik} \lambda_{il}^2
      \lambda_{jk} \lambda_{jl}^2 \lambda_{kl}^2-\lambda_{ij}
      \lambda_{ik} \lambda_{il}^2 \lambda_{jk}^2 \lambda_{jl}
      \lambda_{kl}^2
      -\lambda_{ij}^2 \lambda_{ik} \lambda_{il}^2
      \lambda_{jk} \lambda_{jl} \lambda_{kl}^2 \\
       - \lambda_{ij} \lambda_{ik} \lambda_{il}^2 \lambda_{jk}^2
	\lambda_{jl}^2 \lambda_{kl}
	-\lambda_{ij} \lambda_{ik}^2 \lambda_{il}^2 \lambda_{jk}
	\lambda_{jl}^2 \lambda_{kl}
	-\lambda_{ij} \lambda_{ik}^2
	\lambda_{il}^2 \lambda_{jk}^2 \lambda_{jl}
	\lambda_{kl}-\lambda_{ij}^2 \lambda_{ik} \lambda_{il}^2
	\lambda_{jk}^2 \lambda_{jl} \lambda_{kl} 
  + \lambda_{ij}^2 \lambda_{ik}^2 \lambda_{il}^2 \lambda_{jk}
   \lambda_{jl} \lambda_{kl}\\-\lambda_{ij} \lambda_{ik}^2 \lambda_{il}
   \lambda_{jk} \lambda_{jl}^2 \lambda_{kl}^2 -\lambda_{ij}^2
   \lambda_{ik} \lambda_{il} \lambda_{jk} \lambda_{jl}^2
   \lambda_{kl}^2+\lambda_{ij} \lambda_{ik}^2 \lambda_{il}
   \lambda_{jk}^2 \lambda_{jl} \lambda_{kl}^2
   -\lambda_{ij}^2
   \lambda_{ik} \lambda_{il} \lambda_{jk}^2 \lambda_{jl}
   \lambda_{kl}^2-\lambda_{ij}^2 \lambda_{ik}^2 \lambda_{il}
   \lambda_{jk} \lambda_{jl} \lambda_{kl}^2\\-\lambda_{ij}
   \lambda_{ik}^2 \lambda_{il} \lambda_{jk}^2 \lambda_{jl}^2
    \lambda_{kl}
    +\lambda_{ij}^2 \lambda_{ik} \lambda_{il} \lambda_{jk}^2
    \lambda_{jl}^2 \lambda_{kl} 
     -\lambda_{ij}^2 \lambda_{ik}^2 \lambda_{il} \lambda_{jk} \lambda_{jl}^2 \lambda_{kl}+\lambda_{ij}^2 \lambda_{ik} \lambda_{il}^2 \lambda_{jk}^2 \lambda_{kl}^2
     +\lambda_{ij}^2 \lambda_{ik}^2 \lambda_{il} \lambda_{jl}^2 \lambda_{kl}^2\\+\lambda_{ij}^2 \lambda_{ik}^2 \lambda_{jk} \lambda_{jl}^2 \lambda_{kl}^2
+\lambda_{ij}^2 \lambda_{il}^2 \lambda_{jk}^2 \lambda_{jl} \lambda_{kl}^2
+\lambda_{ik}^2 \lambda_{il}^2 \lambda_{jk}^2 \lambda_{jl}^2 \lambda_{kl}).
\end{multline*} 
This design is locally optimal for some $\beta$ when $ w_{ij}> 0 $ for
all $ 1\le i <j \le 4 $. Figure~\ref{fig:65pointRegion} shows the
optimality region of $6$-point-designs on the left.
\begin{figure}[ht]
\centering
\includegraphics[scale=0.5]{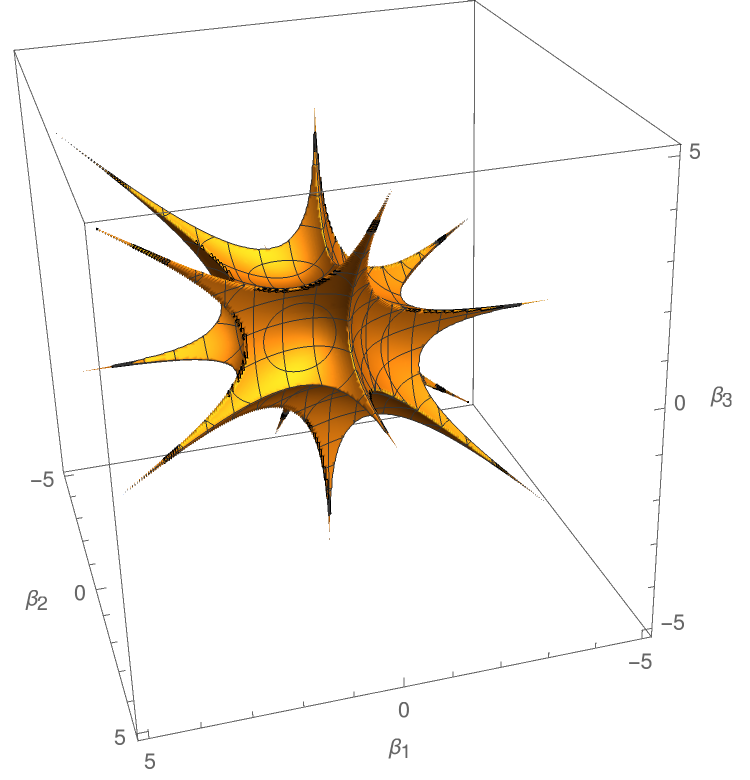}
\includegraphics[scale=0.5]{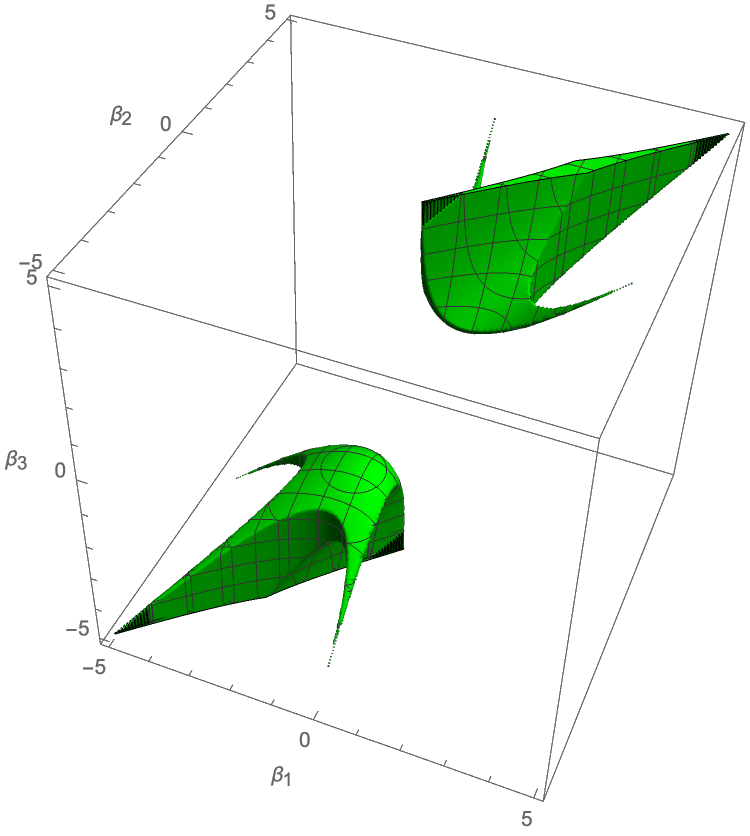}
\caption{\label{fig:65pointRegion}Optimality regions for
  $6$-point (left) and $5$-point designs with $ w_{34}=0 $ (right).}
\end{figure}

\begin{example}
  A simple example for a design with full support arises when
  $\beta_i=0$ for all $ 1\le i \le 4$.  Then
  $ \lambda_{ij}=\frac{1}{4} $ for all $ 1\le i < j \le 4 $ and
  therefore $ w_{ij}=\frac{1}{6}$, that is, assigning the same number
  of repetitions to each comparison, is optimal.
  Figure~\ref{fig:65pointRegion} and the continuity of the formulas
  for $w_{ij}$ illustrate that, whenever all $\beta_{i}$ are
  sufficiently small, an optimal design will assign almost equal
  number of repetitions to each pair~$(i,j)$.
\end{example}

 \begin{remark}
 When working with polynomial equations, Gr\"obner bases are a powerful
 tool.  The expressions of the $w_{ij}$ in terms of the $\lambda_{ij}$
 can also be found using elimination theory.  For example, the
 computer algebra system \Mtwo~\cite{M2} makes this easy.
 \end{remark}

\subsubsection{5-point designs}
We now discuss optimal designs where one weight is zero.  There is one
orbit under the action of $S_4$, that is, a permutation of the
alternatives transforms any given five-point design to the one that
does not use comparison~$(1,2)$.  Therefore we discuss the design with
$w_{12}=0$ and the remaining weights positive.  Then the optimality
conditions become
\begin{align*}
w_{13}&=\frac{2 \lambda _{14} \lambda _{34} \left(\lambda _{14} \lambda _{34}-\lambda _{13} \left(\lambda _{14}+\lambda _{34}\right)\right)}{3 \left(\lambda _{13}^2 \left(\lambda _{14}-\lambda _{34}\right){}^2-2 \lambda _{13} \lambda _{14} \lambda _{34} \left(\lambda _{14}+\lambda _{34}\right)+\lambda _{14}^2 \lambda _{34}^2\right)},\\
w_{14}&=\frac{2 \lambda _{13} \lambda _{34} \left(\lambda _{13} \left(\lambda _{34}-\lambda _{14}\right)-\lambda _{14} \lambda _{34}\right)}{3 \left(\lambda _{13}^2 \left(\lambda _{14}-\lambda _{34}\right){}^2-2 \lambda _{13} \lambda _{14} \lambda _{34} \left(\lambda _{14}+\lambda _{34}\right)+\lambda _{14}^2 \lambda _{34}^2\right)},\\
w_{23}&=\frac{2 \lambda _{24} \lambda _{34} \left(\lambda _{24} \lambda _{34}-\lambda _{23} \left(\lambda _{24}+\lambda _{34}\right)\right)}{3 \left(\lambda _{23}^2 \left(\lambda _{24}-\lambda _{34}\right){}^2-2 \lambda _{23} \lambda _{24} \lambda _{34} \left(\lambda _{24}+\lambda _{34}\right)+\lambda _{24}^2 \lambda _{34}^2\right)},\\
w_{24}&=\frac{2 \lambda _{23} \lambda _{34} \left(\lambda _{23} \left(\lambda _{34}-\lambda _{24}\right)-\lambda _{24} \lambda _{34}\right)}{3 \left(\lambda _{23}^2 \left(\lambda _{24}-\lambda _{34}\right){}^2-2 \lambda _{23} \lambda _{24} \lambda _{34} \left(\lambda _{24}+\lambda _{34}\right)+\lambda _{24}^2 \lambda _{34}^2\right)},\\
\end{align*}
and
\begin{multline*}
w_{34}=
\frac{1}{B}\Big(3 \lambda _{13}^2 \lambda _{14}^2 \lambda _{23}^2 \lambda _{24}^2-4 \lambda _{13} \lambda _{14} \lambda _{23} \lambda _{24} \lambda _{34}^4-2 \lambda _{13} \lambda _{14} \lambda _{23}^2 \lambda _{24}^2 \lambda _{34}^2
+4 \lambda _{13} \lambda _{14}^2 \lambda _{23} \lambda _{24}^2 \lambda _{34}^2\\
+4 \lambda _{13}^2 \lambda _{14} \lambda _{23} \lambda _{24}^2 \lambda
_{34}^2+4 \lambda _{13} \lambda _{14}^2 \lambda _{23}^2 \lambda _{24}
\lambda _{34}^2
+4 \lambda _{13}^2 \lambda _{14} \lambda _{23}^2
\lambda _{24} \lambda _{34}^2-2 \lambda _{13}^2 \lambda _{14}^2
\lambda _{23} \lambda _{24} \lambda _{34}^2
-4 \lambda _{13} \lambda _{14}^2 \lambda _{23}^2 \lambda _{24}^2 \lambda _{34}\\
-4 \lambda _{13}^2 \lambda _{14} \lambda _{23}^2 \lambda _{24}^2
\lambda _{34}-4 \lambda _{13}^2 \lambda _{14}^2 \lambda _{23} \lambda
_{24}^2 \lambda _{34}-4 \lambda _{13}^2 \lambda _{14}^2 \lambda
_{23}^2 \lambda _{24} \lambda _{34}
+2 \lambda _{13} \lambda _{14} \lambda _{23}^2 \lambda _{34}^4+\lambda
_{13}^2 \lambda _{14}^2 \lambda _{23}^2 \lambda _{34}^2 \\
+2 \lambda _{13} \lambda _{14} \lambda _{24}^2 \lambda _{34}^4+\lambda _{13}^2 \lambda _{14}^2 \lambda _{24}^2 \lambda _{34}^2
+2 \lambda _{13}^2 \lambda _{23} \lambda _{24} \lambda _{34}^4+\lambda
_{13}^2 \lambda _{23}^2 \lambda _{24}^2 \lambda _{34}^2-\lambda
_{13}^2 \lambda _{23}^2 \lambda _{34}^4-\lambda _{13}^2 \lambda
_{24}^2 \lambda _{34}^4\\
+2 \lambda _{14}^2 \lambda _{23} \lambda _{24} \lambda _{34}^4
+\lambda _{14}^2 \lambda _{23}^2 \lambda _{24}^2 \lambda _{34}^2-\lambda _{14}^2 \lambda _{23}^2 \lambda _{34}^4-\lambda _{14}^2 \lambda _{24}^2 \lambda _{34}^4\Big),\quad\quad\quad
\end{multline*}
with
\begin{multline*}
B=3 \left(\lambda _{13}^2 \lambda _{14}^2-2 \lambda _{13}^2 \lambda _{14} \lambda _{34}-2 \lambda _{13} \lambda _{14} \lambda _{34}^2-2 \lambda _{13} \lambda _{14}^2 \lambda _{34}+\lambda _{13}^2 \lambda _{34}^2+\lambda _{14}^2 \lambda _{34}^2\right) \\
\qquad \cdot \left(\lambda _{23}^2 \lambda _{24}^2-2 \lambda _{23}^2 \lambda _{24} \lambda _{34}-2 \lambda _{23} \lambda _{24} \lambda _{34}^2-2 \lambda _{23} \lambda _{24}^2 \lambda _{34}+\lambda _{23}^2 \lambda _{34}^2+\lambda _{24}^2 \lambda _{34}^2\right).
\end{multline*}
These designs are optimal if the directional derivative in
$(1,2)$-direction is smaller than or equal to zero, which is
equivalent to
\begin{multline*}
 \lambda _{12} (\lambda _{13} (\lambda _{14} (\lambda _{23} (\lambda _{24}-\lambda _{34})-\lambda _{24} \lambda _{34})+\lambda _{34} (\lambda _{23} (\lambda _{34}-\lambda _{24})-\lambda _{24} \lambda _{34}))
 -\lambda _{14} \lambda _{34} (\lambda _{23} (\lambda _{24}+\lambda _{34})-\lambda _{24} \lambda _{34}))\\
\ge -2 \lambda _{13} \lambda _{14} \lambda _{23} \lambda _{24} \lambda _{34}.
\end{multline*}
This inequality together with the formulas for the weights and the
condition, that all the weights except $ w_{12} $ are positive, gives
the design region.  This region is non-empty.  A plot 
in $\beta$-coordinates is on the right in Figure~\ref{fig:65pointRegion}.

\subsubsection{4-point designs}

We now discuss designs whose support contain exactly four
points. There are $ \binom{6}{4} = 15 $ possibilities for such designs
which each have two zero weights, $ w_{ij}=w_{kl}=0 $.  The four-point
designs form two orbits under the action of~$S_4$, distinguished by
whether the two non-edges in the graph representation share a vertex
or not, that is, whether $|\{i,j,k,l\}| = 4$, that is, $i,j,k,l$ are
all distinct, or $|\{i,j,k,l\}| = 3$, that is, exactly two are equal.
In the first case, there are three different design classes.  We
believe that these designs cannot be $D$-optimal, as the condition
$ w_{ij}=w_{kl}=0 $ with $|\{i,j,k,l\}| = 4$ implies that a third
weight is zero, which would lead to a saturated design.  A proof of
this statement eludes us so far.  Using \Mathematica, it follows from
the equivalence theorem that such a design satisfies
\begin{align}\label{eq:fourpointugly}
\lambda_{ik}(w_{ik}^2-\frac{w_{ik}}{3})=\lambda_{il}(w_{il}^2-\frac{w_{il}}{3})=\lambda_{jk}(w_{jk}^2-\frac{w_{jk}}{3})=\lambda_{jl}(w_{jl}^2-\frac{w_{jl}}{3}),
\end{align}
with $0 < w_{ik}, w_{il}, w_{jk}, w_{jl} < \frac{1}{3}$ and additionally
the inequalities
\begin{align}
\frac{\lambda_{ij}(3(w_{il}+w_{jl})-2)(3(w_{il}+w_{jl})-1)}{\lambda_{jl}w_{jl}(3w_{jl}-1)}&\le 3,\label{ineq:fourpointugly1}\\
\frac{\lambda_{kl}(3(w_{jk}+w_{jl})-2)(3(w_{jk}+w_{jl})-1)}{\lambda_{jl}w_{jl}(3w_{jl}-1)}&\le 3.\label{ineq:fourpointugly2}
\end{align}
Among the solutions of \eqref{eq:fourpointugly} there are the
saturated designs.  If one of the weights equals $1/3$, then
\eqref{eq:fourpointugly} implies that another weight is zero, i.e.\
the design is saturated.  Since the saturated cases have been dealt
with in Theorem~\ref{thm:paths4}, we only look for solutions whose weights all
lie in the open interval $(0,1/3)$.  There are solutions
of~\eqref{eq:fourpointugly} that satisfy this, for example, if the
weights and corresponding intensities are equal.  In all the cases we
examined, the inequalities \eqref{ineq:fourpointugly1} and
\eqref{ineq:fourpointugly2} are not satisfied.
\begin{prob}
\label{prob:infeasible}
Show that independent of the $\lambda_{ij}$, a simultaneous solution
of \eqref{eq:fourpointugly}, \eqref{ineq:fourpointugly1}, and
\eqref{ineq:fourpointugly2} is a saturated design.
\end{prob}

Finally we analyze the orbit of four-point-designs with
$w_{ij}=w_{kl}=0$ with $|\{i,j,k,l\}| = 3$.  Consider the
representative with $w_{12} = w_{13}=0$.  Then,
\begin{align*}
w_{14}&=\frac{1}{3}\\
w_{23}&=\frac{2 \lambda _{24} \lambda _{34} \left(-\lambda _{23} \lambda _{24}-\lambda _{23} \lambda _{34}+\lambda _{24} \lambda _{34}\right)}{3 \left(\lambda _{23}^2 \lambda _{24}^2-2 \lambda _{23}^2 \lambda _{24} \lambda _{34}-2 \lambda _{23} \lambda _{24} \lambda _{34}^2-2 \lambda _{23} \lambda _{24}^2 \lambda _{34}+\lambda _{23}^2 \lambda _{34}^2+\lambda _{24}^2 \lambda _{34}^2\right)}\\
w_{24}&=\frac{2 \lambda _{23} \lambda _{34} \left(-\lambda _{23} \lambda _{24}+\lambda _{23} \lambda _{34}-\lambda _{24} \lambda _{34}\right)}{3 \left(\lambda _{23}^2 \lambda _{24}^2-2 \lambda _{23}^2 \lambda _{24} \lambda _{34}-2 \lambda _{23} \lambda _{24} \lambda _{34}^2-2 \lambda _{23} \lambda _{24}^2 \lambda _{34}+\lambda _{23}^2 \lambda _{34}^2+\lambda _{24}^2 \lambda _{34}^2\right)}\\
w_{34}&=\frac{2 \lambda _{23} \lambda _{24} \left(\lambda _{24} \lambda _{24}-\lambda _{23} \lambda _{34}-\lambda _{24} \lambda _{34}\right)}{3 \left(\lambda _{23}^2 \lambda _{24}^2-2 \lambda _{23}^2 \lambda _{24} \lambda _{34}-2 \lambda _{23} \lambda _{24} \lambda _{34}^2-2 \lambda _{23} \lambda _{24}^2 \lambda _{34}+\lambda _{23}^2 \lambda _{34}^2+\lambda _{24}^2 \lambda _{34}^2\right)}
\end{align*}
This design is optimal if the directional derivatives along $ (1,2) $
and $ (1,3) $ are smaller than 3, so if
\begin{align*}
\frac{3 \lambda_{12} (\lambda_{14}+\lambda_{24})}{\lambda_{14} \lambda_{24}}\le 3~
\land ~\frac{3 \lambda_{13} (\lambda_{14}+\lambda_{34})}{\lambda_{14} \lambda_{34}}\le 3.
\end{align*}
This optimality region for this $4$-point design is visualized in
Figure~\ref{fig:redblue} on the right.  For each point in the
optimality region, the specific weights are computed by the equations
above.

Having discussed all cases, it suffices to apply the symmetry to each
of these regions and then $\RR^3$ can be pieced together.
Figure~\ref{fig:regionpuzzle} gives an idea of this puzzle.  
Because of continuity, the boundaries between any two regions always belong to the region with fewer design points.  Therefore, the yellow amoeba is open, the red regions for saturated
designs are closed (by the non-strict inequalities in
Theorem~\ref{thm:paths4}), and all other regions have both open and
closed boundaries.
\begin{figure}[ht]
\centering
\includegraphics[scale=0.8]{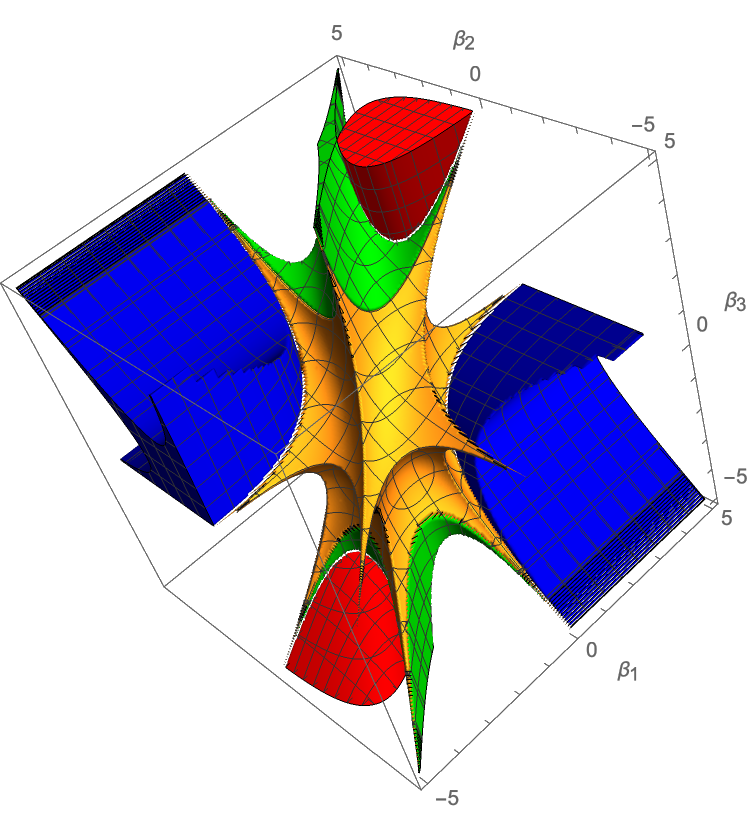}
\caption{\label{fig:regionpuzzle}Assembling optimality regions for the
Bradley--Terry model.}
\end{figure}

\begin{remark}\label{l:amoeba}
  Figures~\ref{fig:65pointRegion} and~\ref{fig:redblue} are
  reminiscent of the amoebas in tropical geometry.  It would be
  interesting to investigate, if the logarithmic algebraic geometry
  that arises in $\beta$-space from the polynomial constraints in
  $\lambda$-space offers new insights.
\end{remark}

\section{Discussion}
\label{sec:discussion}

This paper explains the parameter regions of optimality for
experimental design of the Bradley--Terry model, with the strongest
results for 4 alternatives.  In practical applications this knowledge
can be put to use as follows: First, with a screening experiment,
initial knowledge of approximate parameters is attained.  The initial
guess lies in one of the full-dimensional regions illustrated in
Figure~\ref{fig:regionpuzzle}.  Depending on which region it is, one
can use specific knowledge about the optimal design weights~$w_{ij}$.
For example, there are explicit polynomial formulas for how the
optimal weights depend on the location in parameter space.
Section~\ref{s:fouralternatives} contains explicit such formulas for
the case of 4 alternatives.

In the case that a screening experiment reveals parameters in a region
where saturated designs are optimal, the solution becomes particularly
pleasant: One only needs to assign equal weights to $m-1$ of the
pairs.  The characterization of regions of optimality of saturated
designs is complete, for any number of alternatives
(Theorem~\ref{t:regions}).

We illustrate the effect of choosing the right design by computing the
efficiency of the uniform design (assigning equal weights to all
pairs) in the case of four alternatives.  Consider the line in
parameter space that is specified by $ 2 \beta_{2} = \beta_1 $,
$4\beta_{3} = 5\beta_{1}$.
\begin{figure}[ht]
  \centering \includegraphics{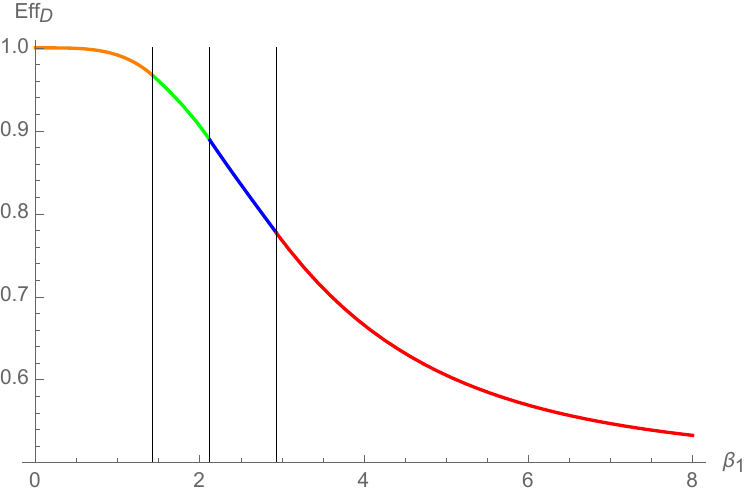}
  \caption{Efficiency of the uniform design along a line in
    $\beta$-space. \label{fig:efficiency}}
\end{figure}
Figure~\ref{fig:efficiency} shows the efficiency of the uniform design
along that line.  At
$\beta = (\beta_{1},\beta_{2},\beta_{3}) = (0,0,0)$ the uniform design
is optimal. As $\beta$ grows, the efficiency decreases.  First
the weights should be adjusted and starting at approximately $1.4$ a
$5$-point design would be optimal.  Around $2.1$ a $4$-point design
becomes optimal and finally, from $2.9$, a saturated design is
optimal.  Clearly, working with a uniform design in the case that the
support should be smaller is inefficient.  In the limit
$\beta \to \infty$ the uniform design requires twice as many
observations as the optimal saturated design.

We outline some further research directions now.  For full support
designs, by Corollary~\ref{c:silvey} the region of optimality is given
by the equations
\[
\lambda_{ij}f(i,j)^T M(\xi^*,\beta)^{-1} f(i,j) = (m-1)
\]
and positivity constraints $\lambda_{ij} > 0$. We hope that tools from
real algebraic geometry can shed further light on such semi-algebraic
sets, especially for designs with full support, as their
semi-algebraic sets contain no complicated inequalities.

The Bradley--Terry model considered here is only for the $m$ levels of
one attribute and an extension to more attributes is conceivable.  The
computational challenges of finding optimal designs are formidable and
a nice geometry as in the present case is not expected.

 In the case of optimality, the equations above express the weights
 of~$\xi$ in terms of the parameters.  We conjecture that the equations
 can be solved in the following sense.
 \begin{conj}
   The $ \binom{m}{2} $ weights of a fully supported $D$-optimal design
   are rational functions in the intensities and of numerator
   degree~$\binom{m}{2} + m -1$.
 \end{conj}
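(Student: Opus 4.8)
The plan is to first convert the optimality conditions into a statement about electrical networks and then read off the degree from the combinatorics of spanning trees. For a full-support design, Corollary~\ref{c:silvey} turns the Kiefer--Wolfowitz criterion into the square system $\lambda_{ij}f(i,j)^T M(\xi^*,\beta)^{-1}f(i,j)=m-1$ for all $1\le i<j\le m$. The key observation is that, with $f(i,j)=e_i-e_j$ and $e_m=0$, the information matrix $M=\sum_{i<j}u_{ij}(e_i-e_j)(e_i-e_j)^T$ with conductances $u_{ij}:=w_{ij}\lambda_{ij}$ is exactly the reduced (grounded) Laplacian of the complete graph $K_m$ in which edge $(i,j)$ carries conductance $u_{ij}$ and vertex $m$ is grounded. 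Hence $f(i,j)^T M^{-1}f(i,j)=R_{ij}$, the effective resistance between $i$ and $j$, and a full-support design is $D$-optimal if and only if $R_{ij}=(m-1)/\lambda_{ij}$ for every pair. In words, the optimal conductances are those realizing pairwise effective resistances inversely proportional to the intensities.

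Next I would make this algebraic through the Matrix--Tree theorem. Writing $\mathcal D(u)=\det M=\sum_{T}\prod_{e\in T}u_e$ for the (multilinear, degree $m-1$) spanning-tree polynomial and $\mathcal N_{ij}(u)=\partial\mathcal D/\partial u_{ij}=\sum_{F}\prod_{e\in F}u_e$ for the (degree $m-2$) polynomial summing over two-forests separating $i$ and $j$, one has $R_{ij}=\mathcal N_{ij}(u)/\mathcal D(u)$, so optimality is the polynomial system
\[
\lambda_{ij}\,\mathcal N_{ij}(u)=(m-1)\,\mathcal D(u),\qquad 1\le i<j\le m.
\]
Because $D$-optimality is strictly concave in the information matrix, the full-support optimum is unique, so this system sets up a birational correspondence $\Phi$ between conductances $u$ (taken projectively) and intensities $\lambda$. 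Moreover $w_{ij}=u_{ij}/\lambda_{ij}$ is homogeneous of degree $0$ in $\lambda$: scaling $\lambda\mapsto t\lambda$ forces $u\mapsto tu$ (resistances scale inversely with conductances), leaving $w$ fixed. Thus each $w_{ij}$ is genuinely a ratio of two forms of equal degree, which is what makes the degree well defined.

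The third step reduces the conjecture to a single number. Solving the homogeneous equations $\lambda_{ij}\mathcal N_{ij}(u)=\lambda_{kl}\mathcal N_{kl}(u)$ expresses the conductances as forms $u_e=(\Phi^{-1})_e(\lambda)$ of some degree $D'$. Substituting into $w_{ij}=(u_{ij}/\lambda_{ij})\big/\sum_{k<l}(u_{kl}/\lambda_{kl})$ and clearing the poles by $\Pi=\prod_{k<l}\lambda_{kl}$ yields numerator and denominator of degree $(\binom m2-1)+D'$. Hence the assertion $\deg=\binom m2+m-1$ is equivalent to $D'=m$. This matches the case $m=4$, where $D'=4$ gives $5+4=9$ and the explicit numerator indeed factors as $\lambda_{ik}\lambda_{il}\lambda_{jk}\lambda_{jl}$ times a degree-$5$ term.

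Finally, to prove $D'=m$ I would combine an upper and a lower bound. For the upper bound one eliminates the $u_e$ from the multilinear system by a resultant computation that respects the spanning-tree and spanning-forest supports of $\mathcal D$ and $\mathcal N_{ij}$, rather than inverting the full resistance matrix by Cramer's rule (the latter overcounts the degree badly, as the $m=4$ case already shows). For the lower bound one can specialize to the geometric family $\pi_i=\pi_1^{\,i}$ used in Lemma~\ref{l:saturatedpath}, where $\lambda_{ij}$ depends only on $j-i$, read off the degree in that one-parameter slice, and then check that numerator and common denominator are coprime so that no cancellation lowers the degree. \emph{The main obstacle is exactly this control of cancellation}: one must show that the top-degree part of the eliminant does not vanish identically, equivalently that the inverse Cremona map $\Phi^{-1}$ has no unexpected base locus dropping its degree below $m$. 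I expect this to require an explicit combinatorial identity expressing the numerator of $w_{ij}$ through spanning forests of $K_m$, generalizing the factorization visible for four alternatives.
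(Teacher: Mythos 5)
This statement is stated in the paper as a \emph{conjecture}: the authors offer no proof, only the explicit degree-$9$ formulas for $m=4$ in Section~\ref{s:fourpointfullsupport} and the combinatorial observations of Remark~\ref{rmk:structure} as evidence. So there is no proof of record to compare against, and your proposal does not close the gap either --- it is a reduction, not a proof. The reduction itself is sound and genuinely illuminating: identifying $M(\xi,\beta)$ with the grounded Laplacian of $K_m$ with conductances $u_{ij}=w_{ij}\lambda_{ij}$, so that the Kiefer--Wolfowitz equalities of Corollary~\ref{c:silvey} become the effective-resistance conditions $\lambda_{ij}\,\mathcal N_{ij}(u)=(m-1)\,\mathcal D(u)$ with $\mathcal D$ the spanning-tree polynomial and $\mathcal N_{ij}=\partial\mathcal D/\partial u_{ij}$, is correct (this is Kirchhoff's formula $R_{ij}=\mathcal N_{ij}/\mathcal D$), and your homogeneity argument correctly shows that each $w_{ij}$ is a ratio of forms of equal degree --- consistent with the $m=4$ data, where numerator and denominator both have degree $9$. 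The spanning-tree structure you invoke also resonates strongly with the empirical tree/path sign pattern the authors record in Remark~\ref{rmk:structure}, so this is arguably the right language for attacking the conjecture.

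The genuine gap is the one you flag yourself, and it is the entire content of the conjecture: you must prove that the inverse of the map $u\mapsto\lambda$ is given by forms of degree exactly $D'=m$, and that no cancellation occurs when the $u_e$ are substituted into $w_{ij}=u_{ij}/\lambda_{ij}$ and normalized. Neither half is established. For the upper bound you gesture at ``a resultant computation that respects the spanning-tree and spanning-forest supports,'' but no such computation or degree bound is carried out, and the naive B\'ezout/Cramer count demonstrably overshoots (as you note for $m=4$), so the correct bound must come from the special base locus of this Cremona-type map --- which is precisely what is not understood. For the lower bound, specializing to the one-parameter family $\pi_i=\pi_1^i$ can at best show the degree in that slice does not drop, and even there you would still need coprimality of numerator and denominator in the full $\lambda$-space. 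Until $D'=m$ is proved, the proposal establishes only that the conjecture is equivalent to a concrete statement about the degree of a birational map, which is progress in formulation but not a proof.
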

 An example of such expressions are the degree 9 equations in
 Section~\ref{s:fourpointfullsupport}.

 \begin{remark}\label{rmk:structure}
 The solution for the four-dimensional case reveals that the numerator
 of a weight $ w_{ij} $ is a sum of 10 monomials. These monomials can
 be described combinatorially as follows. For simplicity, let $i=1$ and
 $j=2$. Then 8 of the 10 monomials are products of the squarefree
 monomial
 $
 \lambda_{12}\lambda_{13}\lambda_{14}\lambda_{23}\lambda_{24}\lambda_{34}
 $ with monomials of the form $ \lambda_{ij}\lambda_{ik}\lambda_{kl} $,
 where $ (ij,ik,kl) $ are edges of the $ 8 $ graphs that are either
 paths or trees on four vertices and that do not contain the
 edge~$ (1,2) $. Furthermore, the monomials that come from a graph with
 a node of degree $ 3 $ have a positive sign, while the monomials from
 paths have a negative sign. The remaining two monomials do not show
 such an easy structure and it remains open, why they are of the form
 $
 \lambda_{13}^2\lambda_{14}^2\lambda_{23}^2\lambda_{24}^2(\lambda_{12}+2\lambda_{34})
 $.  The complete design is generated by permutations acting on the
 indices of the numerator described above, while the denominator of the
 weights is just the sum of all the numerators, that is, a normalization.
 \end{remark}

 From the structure in the case of $4$ alternatives, one can at least
 partially conjecture the structure of a solution in higher dimensions.
 In the case of $5$ alternatives, we conjecture that for full support
 designs the function that expresses $w_{ij}$ in the intensities
 $\lambda_{ij}$ satisfies the following rules: It is of the form of a
 polynomial divided by a normalization.  The numerator polynomial is of
 degree $\binom{m}{2} + m -1$ (i.e.~14 for $m=5$) and composed as
 follows. Start with the monomial
 $\lambda_{12}\lambda_{13} \cdots \lambda_{m-1,m}$.  To construct the
 weight for the comparison $ (1,2) $, multiply it with a square-free
 product of $m-1$ of the variables $ \lambda_{ij} $, where $ ij $ is an
 edge in a spanning tree on $[m]$ which does not contain $ (1,2) $.  Sum
 these monomials over all trees that do not contain $ (1,2) $.  For $ n=5 $,
 only 50 out of the 125 trees qualify.  In this summation, trees of
 maximal degree 2 receive a negative sign, the others a positive sign.
 Additionally, we may have to add monomials of a still unknown structure
 as in Remark~\ref{rmk:structure} above. We expect a similar structure
 in the denominator for $ 5 $ alternatives as for four, so that there
 is a sum of monomials in the denominator that is multiplied by
 $ 4 $. As there are $ 125 $ trees, this would make $ 500 $ monomials
 from the tree-structure. This coincides with having $ 50 $ monomials
 from trees in the numerator, as there are $ 10 $ weights for $ 5 $
 alternatives.  In comparison, for $4$ alternatives, there are
 $3\cdot 22=66 $ monomials in the denominator, but only $6\cdot 8=48 $
 come from the described graph structure.  The implications of these
 observations are still unknown.

\subsection*{Acknowledgement}
The authors are supported by the Deutsche Forschungsgemeinschaft DFG
under grant 314838170, GRK 2297 MathCoRe.

\bibliographystyle{alpha}
\bibliography{bibliography}

\end{document}